\pgfplotsset{compat=1.18}
\newtheorem{proposition}{\bf Proposition}[section]
\newtheorem{lemma}{\bf Lemma}[section]
\newtheorem{theorem}{\bf Theorem}[section]
\newtheorem{remark}{Remark}[section]
\newcounter{SE}
\newcommand{\bb}[1]{\mathbb{#1}}
\title{Dynamical localization and eigenvalue asymptotics: long-range hopping lattice operators with electric field}
\author{M. Aloisio\thanks{Department of Mathematics and Statistics, UFVJM, Diamantina, MG, Brazil} \thanks{Institute of Mathematics and Computer Sciences (ICMC), USP, São Carlos, SP, Brazil\\ Corresponding author. Email: ec.moacir@gmail.com}}
\begin{document}

\maketitle

\begin{abstract} We prove power-law dynamical localization for polynomial long-range hopping lattice operators with uniform electric field under any bounded perturbation. Actually, we introduce new arguments in the study of dynamical localization for long-range models with unbounded potentials, involving the Min-Max Principle and a notion of power-law ULE. Unlike existing results in the literature, our approach does not rely on KAM techniques or on Green’s function estimates, but rather on the asymptotic behavior of the eigenvalues and the potential. It is worth underlining that our general results can be applied to other models, such as Maryland-type potentials.
\end{abstract}  

\ 

\noindent{\bf Keywords}:  dynamical localization, discrete spectrum, spectral theory, quantum dynamics.

\ 

\noindent{\bf  AMS classification codes}: 28A80 (primary), 42A85 (secondary).    

\renewcommand{\thetable}{\Alph{table}}


\section{Introduction}

\subsection{Contextualization}\label{sectIntrod}
\

There is a vast literature concerning the dynamics of wave packet solutions \( e^{-itH}\psi \) of the Schr\"odinger equation
\begin{equation}\label{SE}
    \begin{cases} \partial_t \psi = -iH\psi, ~t \in \mathbb{R}, \\
    \psi(0) = \psi, ~\psi \in \text{dom}H, \end{cases}
\end{equation}
where \( H \) is a self-adjoint operator in $\ell^2(\mathbb{Z})$. Actually, the study of the dynamics \( e^{-itH}\psi\) through the spectral properties of $H$ is a classical subject in quantum dynamics; for instance, see \cite{Aizenman,Oliveira,DJLS,Germinet,Last,AbstSimon,Stolz,Tcheremchantsev} and references therein. In this context, a dynamical quantity to probe the asymptotic behavior of \( e^{-itH}\psi \) is the $q$-moment, $q > 0$, of the position operator at time \( t > 0 \), defined as
\begin{equation*}
    \sum_{n \in \mathbb{Z}} |n|^q |\langle e^{-itH}\psi ,\delta_n \rangle|^2,     
\end{equation*}
where \( \delta_n = (\delta_{nj})_{j \in \mathbb{Z}} \)  is the canonical basis of \( \ell^2(\mathbb{Z})\). These quantities characterize the spreading of the wave packet \( e^{-itH}\psi \).

In recent years, there has been substantial research activity particularly dedicated to the study of the phenomenon of \textit{dynamical localization} \cite{Avila,Damanik,DamanikFillman1,DamanikFillman2,Pigossi3,Pigossi2,Pigossi,Sun2,shi1,shi2,shi3,shi4,Sun}, measured through the moments of order \( q > 0 \).  We say that \( H \) exhibits dynamical localization if, for all initial conditions \( \delta_k \), each moment is uniformly bounded in time, that is, for each \( q > 0 \) and each \(\delta_k\),
\begin{equation*}\label{B}
   \sup_{t \in \mathbb{R}} \sum_{n \in \mathbb{Z}} |n|^q |\langle e^{-itH}\delta_k ,\delta_n \rangle|^2 < +\infty.    
\end{equation*}

This is the standard strong notion of absence of transport for discrete operators. Due to the RAGE Theorem \cite{Oliveira}, dynamical localization requires pure point spectrum; however, it is possible to have systems with pure point spectrum without dynamical localization \cite{DJLS,Germinet}. One way to establish such a property is through the control of eigenfunctions, referred to as semi-uniform localization of eigenfunctions (SULE) \cite{DJLS, Tcheremchantsev}.

Let $H$ be a self-adjoint operator acting in \( \ell^2(\mathbb{Z}) \). We say that \( H \) exhibits SULE if there exists an orthonormal basis of \( \ell^2(\mathbb{Z}) \) by eigenfunctions of \( H \), \( \{\phi_m\}_{m \in \mathbb{Z}} \), that is, \( H\phi_m = \lambda_m \phi_m, \, m\in \mathbb{Z} \), for which there exist \( \alpha > 0 \) and, for each eigenfunction \( \phi_m \), an \( j_m \in \mathbb{Z} \) such that, for any \( \delta > 0 \),
\[
|(\phi_m)(n)| \leq \gamma_\delta e^{\delta |j_m|-\alpha|n-j_m|},
\]
where \( \gamma_\delta >0\) is uniform in \( m \) and \( n \). Consequently, if \( H \) satisfies SULE, then $H$ has dynamical localization (Theorem 7.5 in \cite{DJLS}). Choosing $\delta = 0$ and $j_m = m$ in the definition above yields the 
notion of ULE (uniform localization of eigenfunctions), namely,
\[
|\phi_m(n)| \le \gamma e^{-\alpha |n - m|}, \qquad m,n \in \mathbb{Z}.
\]
It is worth underlining that, in general, ergodic  models cannot satisfy ULE due to the presence of resonances \cite{DJLS}.

An important model in quantum dynamics that satisfies ULE is the discrete one-dimensional Schrödinger operator with uniform electric field of strength \( E = 1 \) \cite{Pigossi, Nazareno}:
\[
\text{dom } H_0 := \{u\in \ell^2(\mathbb{Z}): \sum_{n\in \mathbb{Z}}|n|^2|u(n)|^2 < +\infty\},
\]
\begin{equation}\label{operadorcomcal}
(H_0u)(n) =  u(n-1) + u(n+1) + n u(n), \quad  n \in \mathbb{Z}.    
\end{equation}

de Oliveira and Pigossi \cite{Pigossi} proved that \( H_0 \) exhibits uniform decay of eigenfunctions. For this model, eigenvalues and eigenfunctions were explicitly computed using Fourier series in \cite{Pigossi}, revealing that the spectrum consists of the set of integers. In particular, it was established that \( H_0 \) exhibits ULE (uniform localization of eigenfunctions) and, consequently, dynamical localization. Moreover, through an iterative matrix diagonalization process originally developed by Kolmogorov, Arnold, and Moser (KAM), it was shown that for sufficiently small perturbations, the uniform decay of eigenfunctions persists, thereby establishing the persistence of dynamical localization. In this case, it was also shown that the asymptotic behavior of the eigenvalues is preserved. In fact, the following result was established.

\begin{theorem}[Theorem 4.2 in \cite{Pigossi}]\label{pigo0thm}  To every $b \in \ell^\infty(\mathbb{Z},\mathbb{R})$, the operator  $H_0 + b$  has purely discrete spectrum; if $||b||_{\infty}$ is small enough, then there exists an orthonormal basis of \( \ell^2(\mathbb{Z}) \) by eigenfunctions of \( H_0 + b, \) \( \{\phi_m\}_{m \in \mathbb{Z}}, \) that satisfies
\[
|\phi_m(n)| \leq \gamma \, e^{-|m-n|}, \quad n,m \in \bb{Z},    
\]
for some $\gamma>0$; in particular, $H_0 +b$ has dynamical localization. Moreover, there exists a $\gamma'>0$ such that its eigenvalues satisfy
\[
|\lambda_n - n| \leq \gamma',  \quad n \in \mathbb{Z}.    
\]
\end{theorem}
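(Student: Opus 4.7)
The plan is to handle the four assertions separately, with light machinery for the spectral structure and the eigenvalue asymptotics, and a KAM-type iteration reserved for the uniform localization of eigenfunctions. Writing $H_0 + b = V + (\Delta + b)$ where $V\delta_n = n\delta_n$ and $\Delta$ is the nearest-neighbour Laplacian, one observes that $V$ has compact resolvent while $\Delta + b$ is a bounded self-adjoint perturbation of norm at most $2 + \|b\|_\infty$. Standard perturbation theory then yields that $H_0 + b$ has purely discrete spectrum. Simplicity is a standard consequence of $H_0 + b$ being a tridiagonal Jacobi matrix with off-diagonal entries identically $1$: the eigenvalue equation is a second-order recurrence, and the diverging diagonal forces the limit-point case at both $\pm\infty$, so for any fixed $\lambda$ the $\ell^2(\mathbb{Z})$-eigenspace is at most one-dimensional.

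The starting point for the dynamical part is the explicit eigenbasis of $H_0$ itself: by the Bessel-function calculation already used in the Pigossi construction, $H_0$ has simple spectrum $\{m : m \in \mathbb{Z}\}$ with orthonormal eigenbasis $\phi_m^{(0)}(n) = J_{n-m}(-2)$, whose decay in $|n-m|$ is super-exponential and hence dominates the target rate $e^{-|n-m|}$ with considerable room to spare. The gap $\ge 1$ between consecutive eigenvalues of $H_0$ is what drives the entire argument. For $\|b\|_\infty$ small, regular perturbation theory supplies a one-to-one correspondence $m \leftrightarrow \lambda_m$ between eigenvalues of $H_0$ and of $H_0 + b$, with
\[
|\lambda_m - m| \;\le\; 2 + \|b\|_\infty \;=:\; \gamma',
\]
which gives the eigenvalue asymptotics and ensures that consecutive eigenvalues of $H_0 + b$ remain separated by a positive constant.

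For the ULE bound I would implement a KAM diagonalization. Express $H_0 + b$ in the Bessel basis as $D_0 + W_0$, with $D_0$ diagonal, $\|W_0\| = O(\|b\|_\infty)$, and $W_0$ inheriting off-diagonal exponential decay from the Bessel kernel. At step $k$, diagonalize $D_k + W_k$ (with $W_k$ purely off-diagonal) by $U_{k+1} = \exp(A_{k+1})$, where $A_{k+1}$ is antisymmetric with entries $(A_{k+1})_{mn} = (W_k)_{mn}/(\lambda_m^{(k)} - \lambda_n^{(k)})$ for $m \ne n$. The uniform gap rules out any small-denominator obstruction, and the usual quadratic estimate $\|W_{k+1}\| \le C\|W_k\|^2$ delivers super-exponential convergence once $\|b\|_\infty$ is small enough. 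Off-diagonal exponential decay of $A_{k+1}$ is inherited from $W_k$ with a rate degraded by a geometrically summable amount, so the ordered product of the $U_{k+1}$ converges in an exponentially weighted operator norm to a unitary mapping the Bessel basis to an ONB of eigenfunctions of $H_0 + b$ satisfying $|\phi_m(n)| \le \gamma\, e^{-|n-m|}$. Dynamical localization then follows immediately from Theorem 7.5 of \cite{DJLS}.

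The main obstacle I foresee is the quantitative KAM bookkeeping: maintaining simultaneously an operator-norm bound on $W_k$ that contracts quadratically, an exponential off-diagonal decay for $W_k$ and $A_{k+1}$ whose rate degrades only by a summable amount, and a uniform spectral gap for $D_k$. The integer spectrum of $V$ and the super-exponentially decaying Bessel starting basis are what make this feasible with no Diophantine hypothesis on $b$, since together they furnish a gap strictly separated from zero uniformly in the indices and a large reservoir of decay to absorb the per-step losses.
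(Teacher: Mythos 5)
Your proposal follows essentially the same route as the original proof of this theorem in \cite{Pigossi}, which the present paper only cites rather than reproves: explicit diagonalization of $H_0$ in the Bessel-function basis $\phi_m^{(0)}(n)=J_{n-m}(-2)$ with spectrum $\mathbb{Z}$ and gap $1$, a KAM-type iterative diagonalization for $\|b\|_\infty$ small that preserves exponential off-diagonal decay, and Theorem 7.5 of \cite{DJLS} to pass from (S)ULE to dynamical localization. One caution: the sentence deriving a uniform separation of the eigenvalues of $H_0+b$ from $|\lambda_m-m|\le 2+\|b\|_\infty$ is a non sequitur (that bound allows neighboring $\lambda_m$ to collide, since the perturbation $\Delta+b$ has norm larger than the gap of $V$); the separation and the sharper bound $|\lambda_m-m|\le \|b\|_\infty$ should instead be read off from the splitting you actually use in the KAM step, namely $H_0+b=D_0+W_0$ in the Bessel basis with $D_0=\mathrm{diag}(m)$ of gap $1$ and $\|W_0\|\le\|b\|_\infty$ small.
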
 

\begin{remark}{\rm Actually, de Oliveira and Pigossi in \cite{Pigossi2}, using an iterative matrix diagonalization process (KAM), proved the persistence of the uniform decay of eigenfunctions for sufficiently small perturbations of self-adjoint operators with discrete spectrum that possess this property, since the modulus of the difference between the eigenvalues of the original operator is lower bounded by a uniform constant. For example, in the case of \( H_0 \), one has \( |\lambda_m - \lambda_n| = |m - n| \geq 1 \) for \( m \neq n \).}
\end{remark}

Still with respect to models with uniform electric field, consider the polynomial long-range hopping lattice operators.

Let $(a_n)_{n \in \mathbb{Z}}$ in ${\mathbb{C}}^{\mathbb{Z}}$ such that $a(0) = 0$, $a(m) = a^*(-m)$, $m \in \mathbb{Z}$, and  \(a \in \ell^1_r(\mathbb{Z})\), $r\geq 0$, that is, 
\[||a||_r = \sum_{m \in \mathbb{Z}} |a(m)||m|^r  < + \infty.\]
Let ${\mathcal{L}}_0$ given by
\[
\text{dom } {\mathcal{L}}_0 := \{u\in \ell^2(\mathbb{Z}): \sum_{n\in \mathbb{Z}}|n|^2|u(n)|^2 < +\infty\},
\]

\begin{equation*}\label{eqlongalc}
({\mathcal{L}}_0u)(n) = (T_a u)(n)+ n u(n) = \sum_{m \in \mathbb{Z}} a(n-m) u(m) + n u(n), \quad  n \in \mathbb{Z}.    
\end{equation*}

Recently, Sun and Wang in \cite{Sun} established the following results regarding localization for this model with uniform electric field.

\begin{theorem}[Theorem 1.1 in \cite{Sun}]\label{sunthm0} Fix \(r > 1\) and let \(a \in \ell^1_r(\mathbb{Z})\).  To every $b \in \ell^\infty(\mathbb{Z},\mathbb{R})$, ${\mathcal{L}}_0 + b$ has purely discrete spectrum.  If $\displaystyle\frac{1}{2} < s < r - \displaystyle\frac{1}{2}$ and  $||b||_{\infty}$ is small enough, then there exists an orthonormal basis of \( \ell^2(\mathbb{Z}) \) by  eigenfunctions of \( {\mathcal{L}}_0 + b, \) \( \{\phi_m\}_{m \in \mathbb{Z}},\) that satisfies
\[
|\phi_m(n)| \leq  \frac{\gamma_{s,r}}{|n-m|^s},
\]
where \(\gamma_{s,r}\) is a constant that depends only on \(s\) and \(||a||_r\).
\end{theorem}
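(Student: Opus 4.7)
The plan is to write $T_0 + b = M + V$, where $M$ denotes the multiplication operator $(Mu)(n) = nu(n)$ on $\ell^2(\mathbb{Z})$ (with the same domain as in the statement) and $V := T_a + b$. Since the convolution $T_a$ is bounded on $\ell^2(\mathbb{Z})$ with $\|T_a\| \leq \|a\|_1$ (Young's inequality) and $b$ is bounded, $V$ is a bounded self-adjoint perturbation of $M$. The operator $M$ has compact resolvent: its spectrum consists of the integers with one-dimensional eigenspaces $\mathbb{C}\delta_n$. Compactness of the resolvent is stable under bounded self-adjoint perturbations, so $T_0 + b$ has purely discrete spectrum. A Weyl-type matching of the eigenvalues of $M + V$ with those of $M$ yields an indexing $(\lambda_m)_{m \in \mathbb{Z}}$ with
\[|\lambda_m - m| \leq \|V\| \leq \|a\|_1 + \|b\|_\infty =: \kappa.\]
Simplicity of the $\lambda_m$ for $|m|$ large follows automatically from this asymptotic spacing; for the finitely many remaining eigenvalues, one argues by a perturbation-continuity argument along the interpolation $t \mapsto M + T_a + tb$, using analyticity of isolated eigenprojections.

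\textbf{Polynomial decay via bootstrap.} The eigenvalue equation reads
\[(n + b(n) - \lambda_m)\phi_m(n) = -\sum_{k \in \mathbb{Z}} a(n-k)\phi_m(k).\]
For $|n-m| \geq N_0 := 2(\kappa + \|b\|_\infty)$ the factor on the left has modulus $\geq |n-m|/2$, so
\[|\phi_m(n)| \leq \frac{2}{|n-m|}\sum_{k \in \mathbb{Z}} |a(n-k)||\phi_m(k)|.\]
I would iterate this inequality starting from the trivial bound $|\phi_m(k)| \leq 1$. Assuming $|\phi_m(k)| \leq C_\sigma(1+|k-m|)^{-\sigma}$, I split the convolution at $|k-m| = |n-m|/2$. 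On $\{|k-m| \geq |n-m|/2\}$, use $(1+|k-m|)^{-\sigma} \leq 2^\sigma(1+|n-m|)^{-\sigma}$ and $\sum_k |a(n-k)| \leq \|a\|_1$. On $\{|k-m| < |n-m|/2\}$, use $|n-k| \geq |n-m|/2$ combined with the Markov-type tail bound $\sum_{|j| \geq |n-m|/2}|a(j)| \leq 2^r \|a\|_r |n-m|^{-r}$ together with $|\phi_m(k)| \leq C_\sigma$. Summing yields $|\phi_m(n)| \leq C'_\sigma (1+|n-m|)^{-(1+\min(\sigma,r))}$. Iterating, the exponent climbs from $0$ in unit steps up to its fixed point $1+r$, so every $s < r+1$, and a fortiori every $s \in (1/2, r - 1/2)$, is reached in finitely many steps. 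For $|n-m| < N_0$, the trivial bound $|\phi_m(n)| \leq 1 \leq N_0^s (1+|n-m|)^{-s}$ is absorbed into $\gamma_{s,r}$.

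\textbf{Main obstacle.} The principal difficulty is uniformity of $\gamma_{s,r}$ in $m \in \mathbb{Z}$. Smallness of $\|b\|_\infty$ keeps $N_0$ bounded independently of $m$, so the initial-range contribution is absorbed uniformly and the multiplicative constants $C_\sigma$ depend only on $\|a\|_1$, $\|a\|_r$, $s$, and $r$. The lower bound $s > 1/2$ is the $\ell^2$ threshold ensuring each $\phi_m$ is a unit vector with uniformly controlled norm. The upper bound $s < r - 1/2$ is subtler: it reflects a loss of $1/2$ that arises naturally from a Cauchy–Schwarz step used to convert $\sum_k |a(n-k)||\phi_m(k)|$ into an $\ell^2$ pairing in the Sun–Wang scheme, and carefully tracking this loss through the iteration, together with simplicity of the low-index eigenvalues (which in the original proof is controlled by a KAM-type iteration requiring $\|b\|_\infty$ small), is the technical heart of the argument.
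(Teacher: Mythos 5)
First, a point of orientation: the paper does not prove this statement at all --- it is quoted as Theorem~1.1 of Sun--Wang \cite{Sun}, where it is obtained by a KAM-type diagonalization; the paper's own result in this direction is Theorem~\ref{mainresult}, proved by eigenvalue asymptotics (Theorem~\ref{mainthm}, via Min--Max) plus a bootstrap on the eigenvalue equation, which is essentially the strategy you propose. Your bootstrap section is sound: splitting the convolution at $|k-m|=|n-m|/2$ and using the tail bound $\sum_{|j|\ge R}|a(j)|\le \|a\|_r R^{-r}$ raises the decay exponent by one unit per iteration up to $r+1$, which is stronger than the claimed $|n-m|^{-s}$ for every $s<r-\tfrac12$; this mirrors (in a form valid for real $r>1$) the induction the paper carries out for integer $r$ in the proof of Theorem~\ref{mainresult}(ii), and the constant can indeed be controlled by $s$ and $\|a\|_r$ since $a(0)=0$ gives $\|a\|_1\le\|a\|_r$ and $\|b\|_\infty$ is assumed small.

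There are, however, two genuine gaps. (1) Simplicity. Your claim that simplicity of $\lambda_m$ for large $|m|$ ``follows automatically from this asymptotic spacing'' is false as stated: the bound $|\lambda_m-m|\le\kappa$ with $\kappa=\|a\|_1+\|b\|_\infty$ gives no separation between consecutive eigenvalues unless $\kappa<\tfrac12$, and the theorem asserts purely \emph{simple} discrete spectrum for \emph{every} bounded $b$, with no smallness. The proposed rescue by analytic continuation along $t\mapsto M+T_a+tb$ does not work either: analytic perturbation theory preserves the total multiplicity of spectral clusters, but eigenvalue branches may collide at $t=1$, so simplicity does not propagate along the path. A separate argument is required here (note that the paper's own Theorem~\ref{mainresult} deliberately avoids claiming simplicity). (2) The ``Weyl-type matching'' $|\lambda_m-m|\le\|V\|$ is asserted, not proved. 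Since $H=M+V$ is unbounded below as well as above, the standard Weyl/min--max comparison does not apply directly; producing an indexing of the eigenvalues by $\mathbb{Z}$ with such a bound is precisely the content of the paper's Theorem~\ref{mainthm}, which needs the decomposition into $H_\pm$, the comparison with $|V|$ through a common core, and a counting argument, and yields $\|A\|+1$ rather than $\|A\|$. This second gap is fixable --- the weaker constant only changes your $N_0$ --- but as written it is an unproven claim sitting at the foundation of the decay estimate. Incidentally, your closing remark that the restriction $s<r-\tfrac12$ is forced by a Cauchy--Schwarz loss pertains to the Sun--Wang scheme, not to your argument, which (once the two gaps above are repaired) needs no such restriction.
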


\begin{theorem}[Theorem 1.2 in \cite{Sun}]\label{sunthm}  Fix \(r > 1\).  Let ${\mathcal{L}}_0 + b$ be as before. If
\begin{equation}\label{eq004}
0 \leq q < s - \frac{1}{2} < r - 1   
\end{equation}
and  $||b||_{\infty}$ is small enough, then for any \(\psi \in \ell^2_q(\mathbb{Z})\), one has
\[
\sup_{t \in \mathbb{R}} \sum_{n \in \mathbb{Z}} |n|^{2q} |\langle e^{-i({\mathcal{L}}_0+b)t} \psi,\delta_n \rangle|^2  < +\infty. 
\]  
Moreover, there exists a $\gamma'>0$ such that the eigenvalues satisfy
\[
|\lambda_n - n| \leq \gamma', \quad n \in \mathbb{Z}.    
\]
\end{theorem}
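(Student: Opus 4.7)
The strategy is to derive Theorem~\ref{sunthm} from Theorem~\ref{sunthm0} via an argument in the spirit of the classical DJLS theorem that (Semi-Uniform) Localization of Eigenfunctions implies dynamical localization, but adapted to the polynomial-decay regime. First, I apply Theorem~\ref{sunthm0} to obtain an orthonormal basis $\{\phi_m\}_{m\in\mathbb{Z}}$ of eigenfunctions of $H:=T_0+b$ with eigenvalues $\lambda_m$ satisfying $|\phi_m(n)|\leq\gamma_{s,r}/|n-m|^{s}$. The crucial consequence of the hypothesis $q<s-1/2$ is the uniform moment estimate
\[
\sum_{n\in\mathbb{Z}}|n|^{2q}|\phi_m(n)|^2\leq C_{s,q}\bigl(1+|m|^{2q}\bigr),
\]
proved by splitting at $|n|=2|m|$: on $|n|\leq 2|m|$ the contribution is controlled by $\sum_{k\geq 1}k^{-2s}<\infty$ (needing $s>1/2$), while on $|n|>2|m|$ one has $|n-m|\geq|n|/2$, producing a tail governed by $\sum|n|^{2q-2s}$ (needing $q<s-1/2$).

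Expanding $\psi=\sum_m c_m\phi_m$ with $c_m=\langle\psi,\phi_m\rangle$, the diagonalization of $H$ transforms the dynamical quantity into
\[
\sum_{n\in\mathbb{Z}}|n|^{2q}|\langle e^{-itH}\psi,\delta_n\rangle|^2 = \sum_{m,m'\in\mathbb{Z}}c_m\overline{c_{m'}}\,e^{-it(\lambda_m-\lambda_{m'})}\,D_{m,m'},
\]
where $D_{m,m'}:=\sum_n|n|^{2q}\phi_m(n)\overline{\phi_{m'}(n)}$ is the matrix entry of $|X|^{2q}$ (with $X$ the position operator) in the eigenbasis. Uniformity in $t$ reduces to proving $\sum_{m,m'}|c_m||c_{m'}||D_{m,m'}|<\infty$. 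The main technical obstacle is that a direct Cauchy-Schwarz gives only $|D_{m,m'}|\leq C(1+|m|^q)(1+|m'|^q)$, which makes the double sum diverge. To remedy this, one must exploit the orthogonality $\sum_n\phi_m(n)\overline{\phi_{m'}(n)}=0$ for $m\neq m'$: rewriting
\[
D_{m,m'}=\sum_{n\in\mathbb{Z}}\bigl(|n|^{2q}-|m|^{2q}\bigr)\phi_m(n)\overline{\phi_{m'}(n)},
\]
bounding $\bigl||n|^{2q}-|m|^{2q}\bigr|\leq C|n-m|^{\min(1,2q)}(1+|n|+|m|)^{\max(0,2q-1)}$, and combining with the concentration of $\phi_{m'}$ near $m'$ (so its mass near $m$ is $O(|m-m'|^{-s})$) would yield off-diagonal decay of the form $|D_{m,m'}|\leq C(1+|m|^q+|m'|^q)(1+|m-m'|)^{-\sigma}$ for some $\sigma>0$. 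Combined with the estimate $|c_m|\leq\gamma_{s,r}\sum_n|\psi(n)|/(1+|n-m|^s)$ and a Schur-type estimate on the resulting double sum, this gives the uniform bound $\sup_t\||X|^q e^{-itH}\psi\|^2\leq C\|\psi\|_{\ell^2_q}^2<\infty$. The delicate point I expect to be the hardest is verifying that the borderline threshold $q<s-1/2$ indeed suffices for all these sums to close, as several intermediate estimates only barely converge at the endpoint.

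For the eigenvalue asymptotics, the Min-Max Principle yields the bound essentially for free. Since $a\in\ell^1_r(\mathbb{Z})$ with $r>1$ implies $a\in\ell^1(\mathbb{Z})$ (as $|a(m)|\leq|a(m)||m|^r$ for $|m|\geq 1$ and $a(0)=0$), Young's convolution inequality shows that $T_a$ is bounded with $\|T_a\|\leq\|a\|_{\ell^1}$, and $b$ is bounded by hypothesis. Writing $H=X+(T_a+b)$, where $X:\delta_n\mapsto n\delta_n$ has simple eigenvalues $\{n : n\in\mathbb{Z}\}$ and compact resolvent, a standard min-max comparison under the bounded perturbation $T_a+b$ gives
\[
|\lambda_n-n|\leq\|T_a+b\|\leq\|a\|_{\ell^1}+\|b\|_\infty=:\gamma',
\]
uniformly in $n$, which is the claimed asymptotic bound.
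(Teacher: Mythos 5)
This statement is quoted from Sun--Wang \cite{Sun}; the paper itself does not reprove it (its own contributions, Theorem \ref{mainthm} and Lemma \ref{PLSULEthm}, are related but different in scope), so your proposal has to stand on its own. Its core step does not. The entire dynamical bound hinges on the off-diagonal estimate $|D_{m,m'}|\le C(1+|m|^q+|m'|^q)(1+|m-m'|)^{-\sigma}$, which you only assert (``would yield'') and which is precisely where the difficulty lies. Under the hypothesis one only knows $s>q+\tfrac12$, and $s$ may well be $\le 1$ (e.g.\ $s=0.6$, $r=1.2$); then the kernel $\min(1,|k|^{-s})$ is not summable, so neither the Schur-type step you invoke nor the weighted coefficient bound it requires is available: to close the double sum you need both $\sigma>1$ and an estimate of the form $\sum_m|c_m|^2(1+|m|)^{2q}\lesssim\|\psi\|_{\ell^2_q}^2$, i.e.\ boundedness of the eigenfunction change of basis on $\ell^2_q$, which does not follow from the pointwise bound of Theorem \ref{sunthm0} alone in this regime. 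Moreover, plugging $\bigl||n|^{2q}-|m|^{2q}\bigr|\lesssim|n-m|(|n|+|m|)^{2q-1}$ together with $|\phi_m(n)|\le\gamma|n-m|^{-s}$, $|\phi_{m'}(n)|\le\gamma|n-m'|^{-s}$ into your rewritten $D_{m,m'}$ does not obviously produce any $\sigma>1$ at the borderline $q<s-\tfrac12$; you flag this yourself as unverified. So the central analytic content is missing. For comparison, Sun--Wang obtain the dynamical bound inside their KAM/Sobolev-norm diagonalization, not from the pointwise decay of Theorem \ref{sunthm0} alone, and this paper's Lemma \ref{PLSULEthm} deliberately assumes the stronger decay $\alpha>3/2+q/2$ and treats only $\psi=\delta_k$ with weight $|n|^q$, exactly to avoid these borderline summability issues; it cannot be cited to fill your gap.

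Two smaller points. Your preliminary moment estimate $\sum_n|n|^{2q}|\phi_m(n)|^2\le C_{s,q}(1+|m|^{2q})$ is correct as argued. For the eigenvalue asymptotics, however, ``a standard min-max comparison'' is not available off the shelf: $H$ is unbounded below as well as above, so the Min-Max Principle (Theorem \ref{minmaxthm}) does not apply directly, and one must first fix a $\mathbb{Z}$-indexing of the eigenvalues for the inequality $|\lambda_n-n|\le\|T_a\|+\|b\|_\infty$ to even make sense. This is exactly what the proof of Theorem \ref{mainthm} spends all its effort on (splitting $H$ into $H_\pm$, comparing with $|V|$, counting multiplicities), and that argument yields the constant $\|A\|+1$ rather than $\|A\|$; your sharper one-line claim would need its own justification.
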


\begin{remark}
\end{remark}

\begin{enumerate}
\item [i)] If \(q > \frac{1}{2}\), the assumption \eqref{eq004} in Theorem 1.2 can be replaced by \(\frac{1}{2} < q \leq s < r - \frac{1}{2}\).

\item [ii)] Note that \( H_0 \) is a particular case of a long-range hopping lattice operator. Thus, the results of Sun and Wang presented above extend the result of Pigossi and de Oliveira. However, it is worth mentioning that, although Sun and Wang obtained uniform power-law dynamical localization, this differs from the result of Pigossi and Oliveira, where exponential localization directly leads to dynamical localization without restrictions on \( q > 0 \).

\item [iii)] Pigossi and de Oliveira in \cite{Pigossi3} have also recently proven exponential dynamical localization for polynomial long-range hopping lattice operators when \( a(m) \) has finite support.    
\end{enumerate}

Unlike ergodic Schr\"odinger operators (such as the Anderson model), where small divisors or resonances may occur (see \cite{Avila,Damanik,DamanikFillman1,DamanikFillman2,Spencer,shi1,shi2} and references therein), the model considered here lacks resonances, making it possible to establish rigidity results that relate the spectral structure to the decay of eigenfunctions.

We shall prove the following

\begin{theorem}\label{mainresult} Let ${\mathcal{L}}_0$ be as before and let ${\mathcal{L}}_0+b$, $b \in \ell^\infty(\mathbb{Z},\mathbb{R})$. Then,

\begin{enumerate}

\item [i)] there exists a $\gamma>0$ such that the eigenvalues satisfy
\begin{equation*}
|\lambda_n - n| \leq \gamma, \quad n \in \mathbb{Z},    
\end{equation*}

\item [ii)] if $a \in \ell^1_r(\mathbb{Z})$, $r \in \mathbb{N} \cup \{0\}$,  then  any  orthonormal basis of \( \ell^2(\mathbb{Z}) \) by  eigenfunctions of \( {\mathcal{L}}_0 + b, \) \( \{\phi_m\}_{m 
\in \mathbb{Z}},\) satisfies
\begin{equation*}
|\phi_m(n)| \leq  \frac{\gamma_{r}}{|n-m|^{r+1}},
\end{equation*}
where \(\gamma_{r}\) is a constant that depends only on r and \(||a||_r\),

\item[iii)] if $0< q < 2r-1$, then for any \(\psi = \delta_k\), one has
\[
\sup_{t \in \mathbb{R}} \sum_{n \in \mathbb{Z}} |n|^{q} |\langle e^{-i({\mathcal{L}}_0+b)t} \psi,\delta_n \rangle|^2  < +\infty. 
\]  

\end{enumerate}
\end{theorem}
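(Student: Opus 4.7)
The plan is to address the three parts in sequence, as each feeds the next. For part~(i), I would write $T_0+b=D+B$ with $D\delta_n=n\delta_n$ the diagonal position operator and $B:=T_a+b$, which is bounded with $\|B\|\le \|a\|_0+\|b\|_\infty$. Since $D$ has compact resolvent, so does $T_0+b$, hence its spectrum is purely discrete, and is simple by Theorem~\ref{sunthm0}. As $\sigma(D)=\mathbb{Z}$ is unbounded above and below, I would apply the Min-Max principle separately to the spectral half-spaces above and below a cutoff (shifting by a large constant to render each half semibounded), obtaining an enumeration $(\lambda_n)_{n\in\mathbb{Z}}$ with $|\lambda_n-n|\le \|B\|=:\gamma$.

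For part~(ii), rewrite the eigenvalue equation pointwise as
\[(\lambda_m-n-b(n))\,\phi_m(n)=(a\ast\phi_m)(n).\]
By~(i), $|\lambda_m-n-b(n)|\ge |n-m|/2$ once $|n-m|\ge 2(\gamma+\|b\|_\infty)$, so $|\phi_m(n)|\le (2/|n-m|)\,|(a\ast\phi_m)(n)|$. I would run a bootstrap: assuming inductively that $|\phi_m(j)|\le C_s\min(1,|j-m|^{-s})$ for some integer $0\le s\le r$, split the convolution at $|n-k|=|n-m|/2$. On the far-$a$ piece $|n-k|\ge |n-m|/2$, use the weighted tail $\sum_{|j|\ge R}|a(j)|\le \|a\|_r R^{-r}$ together with $|\phi_m|\le 1$; on the far-$\phi$ piece $|n-k|<|n-m|/2$, use $|k-m|>|n-m|/2$ and the inductive decay. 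Both pieces are $O(|n-m|^{-s})$, and the prefactor $2/|n-m|$ then upgrades to $|\phi_m(n)|\le C_{s+1}|n-m|^{-(s+1)}$. Iterating from $s=0$ (trivial from $\|\phi_m\|_2=1$) up to $s=r$ reaches the claimed $|n-m|^{-(r+1)}$ decay, while small $|n-m|$ is absorbed into $\gamma_r$ via $|\phi_m|\le 1$.

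For part~(iii), the expansion $\delta_k=\sum_m\overline{\phi_m(k)}\,\phi_m$ gives
\[\sup_{t\in\mathbb{R}}|\langle e^{-it(T_0+b)}\delta_k,\delta_n\rangle|\le \sum_m|\phi_m(k)||\phi_m(n)|.\]
Splitting the $m$-sum at $|m-k|=|n-k|/2$, applying Cauchy-Schwarz on each piece, and using Parseval $\sum_m|\phi_m(n)|^2=1$ together with the tail $\sum_{|j|\ge R}|j|^{-2(r+1)}=O(R^{-(2r+1)})$ inherited from~(ii), yields $\sum_m|\phi_m(k)||\phi_m(n)|\le C|n-k|^{-(r+1/2)}$; the moment series $\sum_n |n|^q |n-k|^{-(2r+1)}$ then converges for $q<2r$, comfortably containing $0<q<2r-1$. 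The main obstacle is the bootstrap in~(ii): I need to control the constants $C_s$ produced by the convolution splits across the $r+1$ inductive steps, the hypothesis $s\le r$ being precisely what permits the far-$a$ tail to close and caps the decay at $|n-m|^{-(r+1)}$. Part~(i) is otherwise standard perturbation theory adapted to a two-sided spectrum, and part~(iii) is bookkeeping once~(ii) is established.
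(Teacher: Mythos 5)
Your proposal is correct in substance and follows the same overall architecture as the paper (min--max for the eigenvalue asymptotics, a bootstrap of the eigenvalue equation for the polynomial decay, eigenfunction-expansion bookkeeping for the moments), but parts (ii) and (iii) take genuinely different routes. In (ii) the paper iterates the basic inequality $|\phi_m(n)|\le 4\gamma\,|m-n|^{-1}\sum_k|a(k)||\phi_m(n-k)|$ term by term, handling the exceptional hopping indices (e.g.\ $k=\mp1$, $k=n-m$, $k=(n-m)\mp1$) through the pointwise bounds $|a(j)|\lesssim \|a\|_r|j|^{-r}$; your far-$a$/far-$\phi$ split of the convolution at $|n-k|=|n-m|/2$, using the tail bound $\sum_{|j|\ge R}|a(j)|\le \|a\|_r R^{-r}$, closes the same induction more cleanly, and your observation that $s\le r$ is what caps the gain at $|n-m|^{-(r+1)}$ is exactly the right mechanism. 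In (iii) the paper routes through Lemma \ref{PLSULEthm} (Power-Law ULE), whose $\epsilon$-splitting needs $\alpha>3/2+q/2$, i.e.\ $q<2r-1$; your Cauchy--Schwarz/Parseval argument gives the sharper kernel bound $|\langle e^{-itH}\delta_k,\delta_n\rangle|\le C_k|n-k|^{-(r+1/2)}$ and hence convergence for $q<2r$, which covers the stated range with room to spare. The one place where you are substantially thinner than the paper is (i): the two-sided unboundedness is the real technical content there, and ``shifting by a constant'' does not by itself produce semibounded operators --- the paper restricts to the positive/negative spectral subspaces ($H_\pm$, $H^+$), proves comparison claims against $V_\pm$ and $|V|$, checks core/density statements so the Min--Max Principle applies on a common domain, and counts the eigenvalues of $|V|$ (each value doubled), which is why its bound is $\|A\|+1$ rather than $\|A\|$; your asserted $|\lambda_n-n|\le\|B\|$ presupposes precisely this enumeration-matching argument (any constant suffices for the theorem, but the bookkeeping must be supplied, essentially reproducing Theorem \ref{mainthm}). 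Finally, as in the paper, your step (ii) implicitly uses that the eigenbasis is labeled so that $\phi_m$ has eigenvalue $\lambda_m$ with $|\lambda_m-m|\le\gamma$, and your constants, like the paper's, depend on $\|b\|_\infty$ through $\gamma$.
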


\begin{remark}
\end{remark}

\begin{enumerate}

\item[i)] Note that if $T_a = \triangle$ is the free Laplacian, then $a \in \ell^1_r(\mathbb{Z})$ for every $r > 0$. Thus, our result implies that, for any $b \in \ell^\infty(\mathbb{Z}, \mathbb{R})$, the operator $H_0 + b$ (see definition in~\eqref{operadorcomcal}) exhibits dynamical localization, that is, all moments of order $q>0$ are uniformly bounded. This answers a question posed by de Oliveira and Pigossi in~\cite{Pigossi}.

\item[ii)] Although the operator ${\mathcal{L}}_0 + b$, for any bounded potential $b$, has purely discrete spectrum, which provides strong evidence of dynamical localization, it remained an open question whether dynamical localization still holds when $||b||_{\infty}$ is large. Shortly after the announcement of the first version of this manuscript,  Sun and Wang in \cite{Sun3} announced an affirmative answer to this question in the case of Jacobi operators, using estimates on the Green’s function and without resorting to KAM techniques. Our approach also does not rely on KAM techniques. However, we do not use Green’s function estimates. Instead, our proof is based solely on the asymptotic behavior of the eigenvalues and the potential. Actually, in this paper, we introduce new arguments in the study of dynamical localization for long-range models with unbounded potentials, involving the Min-Max Principle (Theorem \ref{minmaxthm}) and a notion of power-law ULE (Lemma \ref{PLSULEthm}).

\item[iii)] Note that our result applies to the case where $T_a$ is the fractional Laplacian \cite{Coni}.

\item[iv)] Our main contribution in this paper is to show that there is a symbiotic relation between the structure of the spectrum/potential and the uniform polynomial decay of eigenfunctions for models with an electric field, which directly leads (Lemma \ref{PLSULEthm}) to power-law dynamical localization (Theorem \ref{mainresult}). To the best of the present author's knowledge, none of this has yet been detailed in the literature. It is worth underlining that our strategy for proving Theorem \ref{mainresult} is different from the proofs of Theorems \ref{sunthm0} and \ref{sunthm} in \cite{Sun}. In fact, here we use a theory on the asymptotic behavior of eigenvalues of the operators (Min-Max Principle), combined with a notion of power-law ULE (Lemma \ref{PLSULEthm}), whereas in \cite{Sun} the authors deal with Sobolev-type norms and KAM techniques. It is worth mentioning that our notion of power-law ULE (Lemma \ref{PLSULEthm}) applies to other models, such as Maryland-type potentials \cite{SimonMary} (see Lemma \ref{PLSULEthm} below and Corollary 2.10 in \cite{shi2}).

\item[v)] It would be interesting to analyze the technique presented in this work in the context of time-periodic perturbations of models with a uniform electric field, which were discussed in \cite{Pigossi1,Hu}.

\end{enumerate}


\subsection{Organization of text}
\ 

In Section \ref{subasynto}, we discuss the stability of the asymptotic behavior of eigenvalues of models with an electric field. Section \ref{subSULE} addresses how a variation of ULE implies a variation of dynamical localization (Lemma \ref{PLSULEthm}), and then we prove Theorem  \ref{mainresult}. Appendix \ref{secappendix} is devoted to the proof of Proposition \ref{discretproposition}.

Some words about notation: \( \ell^2(\mathbb{Z}) \) represents the space of square-summable sequences indexed by the integers \( \mathbb{Z} \), that is,  
\[
\ell^2(\mathbb{Z}) = \left\{ \psi: \mathbb{Z} \longrightarrow \mathbb{C}\; : \; \sum_{n \in \mathbb{Z}} |\psi(n)|^2 < +\infty \right\}.
\]  
This space is equipped with the usual inner product given by  
\[
\langle \psi, \phi \rangle = \sum_{n \in \mathbb{Z}} \overline{\psi(n)} \phi(n), \quad  \psi, \phi \in \ell^2(\mathbb{Z}).
\]  
The induced norm is defined as \( \|\psi\| = \sqrt{\langle \psi, \psi \rangle} \). 

In this paper, $I$  will always denote the identity operator and \( \|f\|_{\infty} \) the norm of a sequence \( f = (f(n))_{n \in \mathbb{Z}} \) in 
\[
\ell^\infty(\mathbb{Z},\mathbb{R}) = \left\{ \psi: \mathbb{Z} \longrightarrow \mathbb{R}\; : \; \sup_{n \in \mathbb{Z}} |\psi(n)| < +\infty \right\},
\]  
which is given by 
\[\|f\|_{\infty} = \sup_{n \in \mathbb{Z}} |f(n)|.\]

Here, $\{\delta_n := (\delta_{nj})_{j \in \mathbb{Z}} \}$ will always denote the canonical basis in $\ell^2(\mathbb{Z})$. Let \( H \) be a self-adjoint operator defined in \( \ell^2(\mathbb{Z}) \). The resolvent set of \( H \), denoted by \( \rho(H) \), is the set of values \( \lambda \in \mathbb{C} \) for which \( (H - \lambda I)^{-1} \) exists and is a bounded operator. The spectrum of \( H \) is defined as  
\[
\sigma(H)= \mathbb{C} \setminus  \rho(H).
\]  

We introduce the following standard decomposition of the spectrum:

\begin{enumerate}
\item[i)] The essential spectrum of $H$ is the set $\sigma_{\mathrm{ess}}(H)$ consisting of the
accumulation points of $\sigma(H)$ together with the eigenvalues of $H$ of infinite multiplicity.

\item[ii)] The discrete spectrum of $H$ is the set
\[
\sigma_{\mathrm{disc}}(H) := \sigma(H) \setminus \sigma_{\mathrm{ess}}(H),
\]
that is, the set of isolated eigenvalues of $H$, each of finite multiplicity.

\item[iii)] If $\sigma_{\mathrm{ess}}(H) = \varnothing$, we say that $H$ has purely discrete spectrum;
if $\sigma_{\mathrm{disc}}(H) = \varnothing$, we say that $H$ has purely essential spectrum.
\end{enumerate}


\section{Eigenvalue asymptotics}\label{subasynto}

\ 

In this section, we show, as a consequence of the Min-Max Principle (Theorem \ref{minmaxthm}), that the eigenvalues of a bounded self-adjoint operator perturbed by the linear potential \( V(n) = n \) exhibit the same asymptotic behavior as the potential \( V \). For this, some preparation is required.

Let $A$ be a bounded self-adjoint operator in $\ell^2(\mathbb{Z})$ and let  $V : \mathbb{Z} \to \mathbb{R}$ be any real-valued function. Consider 
\[
\text{dom } H := \{u\in \ell^2(\mathbb{Z}): \sum_{n\in \mathbb{Z}}|V(n)|^2|u(n)|^2 < +\infty\},
\]
\[
(Hu)(n) =  (Au)(n) + V(n)u(n), \quad  n \in \mathbb{Z};
\]
so, by the Kato-Rellich Theorem \cite{Oliveira}, $H$ is self-adjoint.

Although natural for specialists, we initially introduce the result below, which is a consequence of the second resolvent identity \cite{Oliveira} and the fact that operators with compact resolvent have purely discrete and unbounded spectrum (see Theorem \ref{thmdiscret} in Appendix \ref{secappendix}).

\begin{proposition} \label{discretproposition} Let $A$ be a bounded self-adjoint operator in $\ell^2(\mathbb{Z})$ and let $V: \mathbb{Z} \longrightarrow \mathbb{R}$ with 
\[ \displaystyle\lim_{n \to \mp \infty} V(n) = \mp\infty;\]
 so $H= A+V$ has purely discrete spectrum and its eigenvalues satisfy
\[\displaystyle\lim_{n \to \mp\infty} \lambda_n = \mp\infty.\]
In this case, one writes
\begin{equation*}
\dots,\lambda_{-(n+1)} \leq \lambda_{-n}, \dots, \lambda_{-2} \leq \lambda_{-1} \leq \lambda_0 \leq \lambda_1 \leq \lambda_2, \dots, \lambda_n \leq \lambda_{n+1},  \dots 
\end{equation*}
where $\lambda_0 = \displaystyle\min \{\lambda_n \mid \,  \lambda_n \geq 0 \}$.
\end{proposition}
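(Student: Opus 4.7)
The plan is to first show that $H$ has compact resolvent, so that Theorem \ref{thmdiscret} (cited from the appendix) applies and yields pure discreteness together with unboundedness of $\sigma(H)$, and then to use simple test vectors to force the unboundedness to occur on \emph{both} sides of the real line.

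Let $M_V$ denote multiplication by $V$, self-adjoint on $\dom H$. Because $|V(n)|\to\infty$ as $|n|\to\infty$, the resolvent $(M_V - iI)^{-1}$ is the diagonal operator on $\ell^2(\mathbb{Z})$ with entries $(V(n)-i)^{-1}\to 0$, and such a diagonal operator is the norm limit of its finite-rank truncations, hence compact. Thus $M_V$ has compact resolvent. Since $A$ is bounded, Kato--Rellich gives that $H=M_V+A$ is self-adjoint on $\dom M_V$ with $i\in\rho(H)$, and the second resolvent identity
\[
(H-iI)^{-1} \;=\; (M_V-iI)^{-1} \;-\; (M_V-iI)^{-1} A\, (H-iI)^{-1}
\]
expresses $(H-iI)^{-1}$ as a sum of compact operators (compact plus compact-times-bounded-times-bounded), so $(H-iI)^{-1}$ is itself compact. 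Theorem \ref{thmdiscret} then yields that $\sigma(H)$ is purely discrete and unbounded.

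To see that the unboundedness occurs on both sides of $\mathbb{R}$, I would test against the canonical basis: for each $n\in\mathbb{Z}$, $\delta_n\in\dom H$ and
\[
\langle H\delta_n,\delta_n\rangle \;=\; V(n) + \langle A\delta_n,\delta_n\rangle,
\]
with $|\langle A\delta_n,\delta_n\rangle|\le \|A\|$. The hypothesis $V(n)\to\pm\infty$ as $n\to\pm\infty$ thus places arbitrarily large positive and arbitrarily large negative values in the numerical range of $H$, so by self-adjointness $\sup\sigma(H)=+\infty$ and $\inf\sigma(H)=-\infty$. Combined with discreteness, this yields a two-sided eigenvalue sequence whose only possible accumulation points are $\pm\infty$; enumerated in the order $\cdots\le\lambda_{-1}\le\lambda_0\le\lambda_1\le\cdots$ with $\lambda_0$ defined as the smallest non-negative eigenvalue, one obtains $\lambda_n\to\pm\infty$ as $n\to\pm\infty$, as stated.

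The only mildly delicate points are the compactness of the diagonal resolvent $(M_V-iI)^{-1}$ on $\ell^2(\mathbb{Z})$ and the book-keeping of domains in the second resolvent identity between the unbounded $M_V$ and $H$; both are routine but must be recorded explicitly. Everything else is a direct appeal to Kato--Rellich and to the spectral characterization of operators with compact resolvent stated in Theorem \ref{thmdiscret}.
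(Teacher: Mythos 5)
Your proposal is correct and follows essentially the same route as the paper: compactness of $(V-iI)^{-1}$, the second resolvent identity to transfer compactness to $(H-iI)^{-1}$, Theorem \ref{thmdiscret} for discreteness, and the test vectors $\delta_n$ with $\langle H\delta_n,\delta_n\rangle = V(n)+\langle A\delta_n,\delta_n\rangle$ to force eigenvalues to diverge to both $+\infty$ and $-\infty$. The only cosmetic differences are that you verify compactness of the diagonal resolvent directly as a norm limit of finite-rank truncations and phrase the two-sidedness via the numerical range, whereas the paper cites Theorem \ref{thmdiscret} for the former and argues the latter by contradiction through the spectral expansion.
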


\begin{remark}\label{remrkeig}
\end{remark}
\begin{enumerate}

    \item [i)] Condition $\displaystyle\lim_{n \to \pm \infty} |V(n)| = +\infty$
is important to ensure that \( H \) has a purely discrete spectrum. As is well known, e.g., only $\displaystyle\limsup_{n \to \pm \infty} |V(n)| = +\infty$ is not sufficient; see, for example, \cite{Jitomirskaya} for cases involving  Schrödinger operators with sparse potentials.

    \item [ii)] For the reader's convenience, we present a proof of this result in Appendix \ref{secappendix}. 
\end{enumerate}

The result below is the Theorem XIII.1 in \cite{Simon2}.

\begin{theorem}[Min-Max Principle]\label{minmaxthm} Let $H$ be a self-adjoint operator that is bounded from below, i.e.,
$H \ge cI$ for some $c$. Define
\[
\mu_n
=
\sup_{\varphi_1,\ldots,\varphi_{n-1}}
\inf_{\substack{
\varphi \perp \varphi_1,\dots,\varphi_{n-1} \\
\varphi \in {\rm dom}\,H,\ \|\varphi\|=1
}}
\langle H \varphi,\varphi\rangle.
\]
Then, for each fixed $n$, either:
\begin{enumerate}
\item[(a)]
there are $n$ eigenvalues (counting degenerate eigenvalues a number of
times equal to their multiplicity) below the bottom of the essential
spectrum, and $\mu_n$ is the $n$th eigenvalue counting multiplicity;
\item[(b)]
$\mu_n$ is the bottom of the essential spectrum, i.e.,
\[
\mu_n = \inf\{\lambda \mid \lambda \in \sigma_{\mathrm{ess}}(H)\}
\]
and in that case
\[
\mu_n = \mu_{n+1} = \mu_{n+2} = \cdots
\]
and there are at most $n-1$ eigenvalues (counting multiplicity) below
$\mu_n$.
\end{enumerate}
\end{theorem}

\begin{theorem}\label{mainthm}
Let $H = A + V$ on $\ell^2(\mathbb{Z})$, where $A$ is bounded  self-adjoint  and
\[
(Vu)(n) = n\,u(n), \qquad n \in \mathbb{Z}.
\]
Then $H$ is self-adjoint with purely discrete spectrum $\{\lambda_n\}_{n\in\mathbb{Z}}$. In this case, one writes
\begin{equation*}
\dots\lambda_{-n}, \dots, \lambda_{-2} \leq \lambda_{-1} \leq \lambda_0 \leq \lambda_1 \leq \lambda_2, \dots, \lambda_n  \dots 
\end{equation*}
where $\lambda_0 = \displaystyle\min \{\lambda_n \mid \,  \lambda_n \geq 0 \}$. Moreover,
\[
|\lambda_n - n| \le ||A||, \qquad n \in \mathbb{Z}.
\]
\end{theorem}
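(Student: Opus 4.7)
The first assertions — self-adjointness, pure discreteness with finite multiplicities, and the two-sided indexing with $\lambda_0 = \min\{\lambda_n : \lambda_n \ge 0\}$ — are immediate from Proposition \ref{discretproposition} applied with $V(n) = n$, since $\lim_{n \to \pm\infty} V(n) = \pm \infty$. The substantive content is therefore the quantitative estimate $|\lambda_n - n| \le \|A\| + 1$.

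Set $c := \|A\|$. The decisive structural fact is that $V$ is diagonal in the canonical basis, $V \delta_n = n\,\delta_n$, so $\sigma(V) = \mathbb{Z}$ with each eigenvalue simple and eigenbasis $\{\delta_n\}_{n \in \mathbb{Z}}$. Since $|\langle A\psi, \psi\rangle| \le c\|\psi\|^2$ for every $\psi \in \ell^2(\mathbb{Z})$, one obtains the operator-form sandwich
\[
V - c I \,\le\, H \,\le\, V + c I,
\]
which is the bridge between the explicit integer spectrum of $V$ and the unknown eigenvalues of $H$.

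To invoke Theorem \ref{minmaxthm}, which requires a bounded-below operator, I would handle the non-negative and negative parts of $\sigma(H)$ separately. For the eigenvalues $\lambda_0 \le \lambda_1 \le \cdots$, work with the restriction of $H$ to the spectral subspace $\mathrm{Ran}\,P_H([\lambda_0, \infty))$, on which $H$ is bounded below by $\lambda_0$ with precisely these eigenvalues. For the upper bound $\lambda_n \le n + c + 1$, the trial subspaces are built from blocks of the $V$-eigenbasis of the form $\mathrm{span}\{\delta_{\lceil c \rceil}, \ldots, \delta_{n + \lceil c \rceil}\}$: on any such block the form comparison yields $0 \le \langle H\psi, \psi\rangle \le n + \lceil c \rceil + c$ for every unit vector, and a dimension/codimension argument (counting how the block intersects the finite-rank part of $\mathrm{Ran}\,P_H((-\infty, 0))$ captured by the comparison with $V$) transfers this into the desired bound for $\lambda_n$. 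The matching lower bound $\lambda_n \ge n - c - 1$, as well as the case $n < 0$, follow by applying the symmetric argument to $-H$ on $\mathrm{Ran}\,P_H((-\infty, \lambda_{-1}])$, using $H \ge V - cI$ and trial vectors drawn from $\{\delta_j : j \ge n\}$.

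The principal obstacle is that $H$ is unbounded in both directions, so Theorem \ref{minmaxthm} as stated does not apply to $H$ itself; the proof must carefully reconcile trial subspaces built from the $V$-eigenbasis $\{\delta_n\}$ with the unknown spectral decomposition of $H$. The ``$+1$'' in the final estimate absorbs a possible one-index displacement at the boundary between the non-negative and negative parts of $\sigma(H)$: under the bounded perturbation $A$, an eigenvalue of $V$ can cross the value $0$, causing the natural pairing between the sequences $(n)_{n \in \mathbb{Z}}$ and $(\lambda_n)_{n \in \mathbb{Z}}$ — both indexed so that index $0$ corresponds to the smallest non-negative value — to be offset by at most one position, which is exactly the bookkeeping contribution producing $c + 1$ rather than $c$.
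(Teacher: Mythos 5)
Your reduction of the qualitative statements to Proposition \ref{discretproposition} matches the paper, and you correctly isolate the two real difficulties (two-sided unboundedness and the index-$0$ anchoring). But the central step of your plan --- transferring the form bounds $0 \le \langle H\psi,\psi\rangle \le n+\lceil c\rceil+c$ on the block $M=\mathrm{span}\{\delta_{\lceil c\rceil},\dots,\delta_{n+\lceil c\rceil}\}$ into the bound $\lambda_n \le n+c+1$ via a ``dimension/codimension argument'' --- is precisely where the proof is missing, and the inference you gesture at is false as stated for operators unbounded below. Knowing that $M$ meets $\mathrm{Ran}\,P_H((-\infty,0))$ trivially and that the form is $\le \beta$ on $M$ does \emph{not} imply that $H$ has $\dim M$ eigenvalues in $[0,\beta]$: take $H=\mathrm{diag}(-R,R)$ on $\mathbb{C}^2$ and $M=\mathrm{span}\{(1,1)/\sqrt{2}\}$; the form value is $0$, the intersection with the negative eigenspace is trivial, yet the smallest nonnegative eigenvalue is $R$, arbitrarily large. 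A vector of $M$ orthogonal to $\mathrm{Ran}\,P_H([0,\beta])$ may split between the negative spectral subspace and the $(\beta,\infty)$ subspace, and the two contributions can cancel, so no contradiction arises. (Also, $\mathrm{Ran}\,P_H((-\infty,0))$ is infinite-dimensional here, so the ``finite-rank part'' you invoke is not a well-defined object at this stage; controlling such overlaps would essentially require the eigenfunction decay proved only later.) This is exactly the obstruction the paper removes by a different device: it folds $H$ into the semibounded operator $H^+$ (reflecting the negative eigenvalues) and compares it with $|V|$ through Claims 1 and 2 and the Min--Max Principle, the multiplicity bookkeeping $\mu_n(|V|)=\lfloor n/2\rfloor$ of the folded spectra being what produces the ``$+1$'' --- not an eigenvalue crossing $0$, as you suggest.

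A second, quantitative problem: even if your counting step were granted, your blocks only give $\lambda_n \le n+\lceil c\rceil + c \le n + 2c + 1$, i.e. a constant $2\|A\|+1$ rather than the claimed $\|A\|+1$; anchoring the block above level $\lceil c\rceil$ (needed for positivity of the form) is incompatible with keeping the upper form bound at $n+c$. The matching lower bound is also not addressed: bounding $\lambda_n$ from below means bounding $\dim \mathrm{Ran}\,P_H([0,t])$ from \emph{above}, which your trial-subspace construction does not do; the paper handles it with the separate comparison $\langle H^+\psi,\psi\rangle \ge \langle V_+\psi,\psi\rangle - \|A\|$ (Claim 1) together with the core/domain argument for $|V|$. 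So the proposal identifies the right landscape but leaves a genuine gap at the decisive step, and as set up it would not reach the stated constant.
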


\begin{proof} The first two statements in the theorem follow from Proposition \ref{discretproposition}.

Since the Min–Max Principle applies only to operators which are bounded below, and $H$ itself is not bounded below, we will write $H = H_+  +  H_-$ as below.

Let $\{\phi_j\}_{j\in\mathbb{Z}}$ be an orthonormal basis of eigenvectors of $H$ with corresponding eigenvalues $\{\lambda_j\}_{j\in\mathbb{Z}}$. We decompose $H$ since
\[
H = H_+ + H_-,
\qquad
H_+ = \sum_{j>0} \lambda_{j} \langle \cdot,\phi_j\rangle\phi_j,
\quad
H_- = \sum_{j \leq 0 } \lambda_{j} \langle \cdot,\phi_j\rangle\phi_j.
\]
Then $H_+$ and $H_-$ are self-adjoint, acting on the orthogonal subspaces $\mathcal{H}_+ = \operatorname{ran}(H_+)$, and $\mathcal{H}_- = \operatorname{ran}(H_-)$, respectively. 

For each $N \geq 2$, we consider the following auxiliary operator
\[
H_+^N=\sum_{j>0}\lambda_j\langle\cdot,\phi_j\rangle\phi_j
+\sum_{j\le 0} \lambda_N \langle\cdot,\phi_j\rangle\phi_j.
\]

We also decompose $V$:
\[
V =  V_+ + V_-,
\quad
V_+ = \sum_{j > 0} j \langle \cdot,\delta_j\rangle\delta_j, \quad
V_- = \sum_{j \leq 0 } j \langle \cdot,\delta_j\rangle\delta_j.
\]
Then $V_+$ and $V_-$ are self-adjoint, acting on the orthogonal subspaces $\mathcal{V}_+ = \operatorname{ran}(V_+)$, and $\mathcal{V}_- = \operatorname{ran}(V_-)$, respectively.

For each $N \geq 2$, we also consider the following auxiliary operator
\[
V_+^N=\sum_{j > 0} j \langle \cdot,\delta_j\rangle\delta_j
+\sum_{j\le 0} N \langle\cdot,\delta_j\rangle\delta_j.
\]
\noindent {\bf Claim 1:} For every $\psi \in  \mathrm{dom}\, V_+ \cap \mathrm{dom}\,H_+^N$, $||\psi||=1$,
\[- ||A|| + \langle  V_+ \psi,\psi\rangle  \leq  \langle H_+^N\psi,\psi\rangle.\]

\noindent \textbf{Proof of Claim 1.} 
If $\psi \in \mathcal{V}_+^\perp$, then
\[
- ||A|| + \langle V_+ \psi, \psi \rangle = - ||A|| \leq 0 \leq  \langle H_+^N \psi, \psi \rangle.
\]
On the other hand, if $\psi \in \mathcal{V}_+$, since $A$ is bounded,
\[
- ||A|| + \langle V_+ \psi, \psi \rangle = - ||A|| + \langle V \psi, \psi \rangle  
\leq  \langle H \psi, \psi \rangle \leq  \langle H_+^N \psi, \psi \rangle.
\]
This proves Claim~1. 

Note that: since $\{\phi_j\}_{j \in \mathbb{Z}} \subset  {\rm dom}\,V \subset \mathcal{L} =  {\rm dom}\,V_+ \cap {\rm dom}\,H_+^N$; so, $\mathcal{L} $ is dense in ${\rm dom}\,H_+^N$ with respect to the graph norm $\|\cdot\|_{H_+^N}.$ Hence,
\[
\sup_{\psi_1,\dots,\psi_{n-1}} 
\inf_{\substack{ \psi \perp \psi_1,\dots,\psi_{n-1}\\ \psi\in \mathcal{L} \ \|\psi\|=1}} 
\langle  H_+^N\psi,\psi\rangle 
=
\sup_{\psi_1,\dots,\psi_{n-1}} 
\inf_{\substack{ \psi \perp \psi_1,\dots,\psi_{n-1}\\ \psi\in {\rm dom}\,H_+^N,\ \|\psi\|=1}} 
\langle  H_+^N\psi,\psi\rangle.
\]

By Claim~1,
\begin{eqnarray*}  
  -\|A\| + \sup_{\psi_1,\dots,\psi_{n-1} \subset {\mathcal{V}}_+}
\inf_{\substack{ \psi \perp \psi_1,\dots,\psi_{n-1}\\ \psi\in {\rm dom}\,V_+ \cap {\mathcal{V}}_+,\ \|\psi\|=1}} 
\langle  V_+\psi,\psi\rangle &=& -\|A\| +  \sup_{\psi_1,\dots,\psi_{n-1} \subset {\mathcal{V}}_+}
\inf_{\substack{ \psi \perp \psi_1,\dots,\psi_{n-1}\\ \psi\in {\rm dom}\,V_+,\ \|\psi\|=1}} 
\langle  V_+\psi,\psi\rangle\\  &\leq&  -\|A\| + \sup_{\psi_1,\dots,\psi_{n-1} \subset {\mathcal{V}}_+} \inf_{\substack{ \psi \perp \psi_1,\dots,\psi_{n-1}\\ \psi\in \mathcal{L},\ \|\psi\|=1}}  \langle V_+\psi,\psi\rangle\\ &\leq&  \sup_{\psi_1,\dots,\psi_{n-1} \subset {\mathcal{V}}_+}  \inf_{\substack {\psi \perp \psi_1,\dots,\psi_{n-1}\\ \psi\in \mathcal{L},\ \|\psi\|=1}} \langle H_+^N\psi,\psi\rangle\\ &\leq&  \sup_{\psi_1,\dots,\psi_{n-1}} \inf_{\substack {\psi \perp \psi_1,\dots,\psi_{n-1}\\ \psi\in \mathcal{L},\ \|\psi\|=1}} \langle H_+^N\psi,\psi\rangle.
\end{eqnarray*}
which gives, by the Min-Max Principle , for every $1 \leq n \leq  N-1$, 
\[-\|A\| + n \le \mu_n(H_+^N) = \lambda_n(H).\]
(Note that: $\sigma_{ess}(V_{+}\big|_{\mathcal{V}_{+}} ) = \emptyset$ and $\sigma_{ess}(H_+^N) = \{\lambda_N\}$!) Thus, since $N\geq 2$ is arbitrary, it follows that, for every $ n \geq 1$, 
\[-\|A\| + n \le \lambda_n(H).\]

\noindent {\bf Claim 2:} For every $\psi \in  \mathrm{dom}\, V_+^N \cap \mathrm{dom}\,H_+$, $||\psi||=1$,
\[\langle  H_+ \psi,\psi\rangle  \leq  ||A|| +  \langle V_+^N\psi,\psi\rangle.\]

\noindent \textbf{Proof of Claim 2.} 
If $\psi \in \mathcal{H}_+^\perp$, then
\[
\langle H_+ \psi, \psi \rangle = 0 \leq \|A\| + \langle V_+^N \psi, \psi \rangle.
\]
On the other hand, if $\psi \in \mathcal{H}_+$, since $A$ is bounded,
\[
\langle H_+ \psi, \psi \rangle = \langle H \psi, \psi \rangle 
= \langle A \psi, \psi \rangle + \langle V \psi, \psi \rangle 
\leq \|A\| + \langle V_+^N \psi, \psi \rangle.
\]
This proves Claim~2.

Note that: $\{\delta_j\}_{j \in \mathbb{Z}} \subset \mathrm{dom}\, V \subset \mathcal{D} =  \mathrm{dom}\, V_+^N \cap \mathrm{dom}\,H_+$. So, $\mathcal{D} $ is dense in ${\rm dom}\,V_+^N$ with respect to the graph norm $\|\cdot\|_{V_+^N}.$ Now, using the upper bound from  Claim 2, we obtain, for every $n \geq 1$,
\begin{eqnarray*}
\sup_{\psi_1,\dots,\psi_{n-1} \subset {\mathcal{H}}_+} \inf_{\substack{ \psi \perp \psi_1,\dots,\psi_{n-1}\\ \psi\in \mathrm{dom}\,H_+ \cap {\mathcal{H}}_+ ,\ \|\psi\|=1}} \langle  H_+\psi,\psi\rangle &=& \sup_{\psi_1,\dots,\psi_{n-1} \subset {\mathcal{H}}_+} \inf_{\substack{ \psi \perp \psi_1,\dots,\psi_{n-1}\\ \psi\in \mathrm{dom}\,H_+ ,\ \|\psi\|=1}} \langle  H_+\psi,\psi\rangle\\ &\leq& \sup_{\psi_1,\dots,\psi_{n-1} \subset {\mathcal{H}}_+} \inf_{\substack{ \psi \perp \psi_1,\dots,\psi_{n-1}\\ \psi\in \mathcal{D} ,\ \|\psi\|=1}} \langle  H_+\psi,\psi\rangle\\ &\leq&  ||A|| + \sup_{\psi_1,\dots,\psi_{n-1} \subset {\mathcal{H}}_+} \textit{}\inf_{\substack{ \psi \perp \psi_1,\dots,\psi_{n-1}\\ \psi\in \mathcal{D},\ \|\psi\|=1}} \langle  V_+^N \psi,\psi\rangle\\ &\leq&  ||A|| + \sup_{\psi_1,\dots,\psi_{n-1} } \inf_{\substack{ \psi \perp \psi_1,\dots,\psi_{n-1}\\ \psi\in \mathcal{D},\ \|\psi\|=1}} \langle  V_+^N \psi,\psi\rangle
\end{eqnarray*}
which gives, by the Min-Max Principle, for every $1 \leq n \leq N-1$, 
\[
\lambda_{n}(H) = \lambda_{n}(H_+) \le \|A\| + n.
\]
(Note that: $\sigma_{ess}(H_+\big|_{ {\mathcal{H}}_+} ) = \emptyset$ and $\sigma_{ess}(V_+^N) = \{N\}$!). As $N\geq 2$ is arbitrary, it follows that, for every $ n \geq 1$, 
\[
\lambda_{n}(H)  \le \|A\| + n.
\]
Finally, the same reasoning applies to $-H$, yielding the same bounds for $-\lambda_n(H)$, $n \leq 0$. Thus, we can conclude that
\[
|\lambda_n(H) - n| \le \|A\|, \quad n \in \mathbb{Z}.
\]
As we wanted to show.
\end{proof}


\section{Power-law ULE revisited and Proof of Theorem \ref{mainresult}}\label{subSULE}
\ 

Although it has been known by experts for a long time and discussed in recent papers, mainly by Shi et al. \cite{shi1,shi2,shi3} and Sun et al. \cite{Sun2, Sun}, that a uniform polynomial decay of eigenfunctions of operators in \( \ell^2(\mathbb{Z}) \) with pure point spectrum implies a limitation on moments, the author is not aware of any reference that provides a detailed explanation of a general notion of power-law ULE for arbitrary operators in \( \ell^2(\mathbb{Z}) \) (an analysis in this direction applied to polynomial long-range hopping random operators can be found in \cite{Sun2}). In this context, we revisit below a notion of power-law ULE for general operators with pure point spectrum acting in \( \ell^2(\mathbb{Z}) \).

\begin{lemma}[Power-law ULE]\label{PLSULEthm} Let $H$ be a self-adjoint operator acting in \( \ell^2(\mathbb{Z}) \). Suppose that there exists an orthonormal basis  of \( \ell^2(\mathbb{Z}) \) by eigenfunctions of $H$, $\{\phi_m\}_{m \in\ \mathbb{Z}}$, and an  $\alpha>0$ such that 
\begin{equation*}
|\phi_m(n)| \leq  \frac{\gamma_\alpha }{|n - m|^\alpha}, \, m,n \in \mathbb{Z},
\end{equation*}
for some \( \gamma_\alpha>0 \) depending only on $\alpha$. If $\alpha>3/2 + q/2$, then 
\[
\sup_{t \in \mathbb{R}}  \sum_{n \in \mathbb{Z}} |n|^q |\langle e^{-itH}\delta_k ,\delta_n \rangle|^2  < C_{q,k}.
\]    
\end{lemma}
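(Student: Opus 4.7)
The proof will be a straightforward diagonalization argument followed by a convolution estimate with polynomial weights. Since $\{\phi_m\}$ is an orthonormal basis and $H\phi_m=\lambda_m\phi_m$, expanding $\delta_k=\sum_m \overline{\phi_m(k)}\phi_m$ gives
\[
\langle e^{-itH}\delta_k,\delta_n\rangle=\sum_{m\in\mathbb{Z}} e^{-it\lambda_m}\,\overline{\phi_m(k)}\,\phi_m(n),
\]
and the triangle inequality produces the $t$-independent bound
\[
|\langle e^{-itH}\delta_k,\delta_n\rangle|\;\le\;\sum_{m\in\mathbb{Z}}|\phi_m(k)|\,|\phi_m(n)|.
\]
This first step is the whole reason the dynamical problem can be reduced to a static one: after taking absolute values, every phase $e^{-it\lambda_m}$ disappears, and what remains is a purely combinatorial statement about the spatial profile of the eigenfunctions.

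Next, I would insert the polynomial decay. Extending the hypothesis in the trivial way $|\phi_m(j)|\le C_\alpha(1+|j-m|)^{-\alpha}$ (the case $j=m$ being handled by $\|\phi_m\|_\infty\le 1$ and absorbed into the constant), the task becomes to bound
\[
\Sigma(n,k):=\sum_{m\in\mathbb{Z}}(1+|m-k|)^{-\alpha}(1+|m-n|)^{-\alpha}.
\]
The triangle inequality $|n-k|\le|m-k|+|m-n|$ forces at least one of these two distances to exceed $|n-k|/2$; splitting the sum accordingly, extracting the large factor, and using $\alpha>1$ to control the residual through $\sum_j(1+|j|)^{-\alpha}<\infty$, one obtains the convolution estimate
\[
\Sigma(n,k)\;\le\; C_\alpha\,(1+|n-k|)^{-\alpha}.
\]
Squaring yields the uniform pointwise propagator decay $\sup_t|\langle e^{-itH}\delta_k,\delta_n\rangle|^2\le C'_\alpha\,(1+|n-k|)^{-2\alpha}$.

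Finally, multiplying by $|n|^q$ and summing in $n$ produces a finite bound whenever $2\alpha-q>1$, and the hypothesis $\alpha>3/2+q/2$ comfortably implies this; the resulting constant $C_{q,k}$ depends on $k$ only through the shift in the weight $(1+|n-k|)^{-2\alpha}$. I expect the only nontrivial ingredient to be the convolution estimate for $\Sigma(n,k)$: it is the sole place where the polynomial decay is genuinely exploited and where the threshold $\alpha>1$ enters. Everything else -- the triangle-inequality reduction, the regularization at $j=m$, and the final summability check -- is essentially bookkeeping around a discrete Young-type inequality with polynomial kernels.
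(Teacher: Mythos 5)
Your proposal is correct, and it follows the same overall reduction as the paper (expand $\delta_k$ in the eigenbasis, kill the phases with the triangle inequality, and estimate the static sum $\sum_m|\phi_m(k)|\,|\phi_m(n)|$ by a power of the distance $|n-k|$, then sum the weighted moments). Where you genuinely diverge is in the key estimate: you treat the sum as a discrete convolution of two polynomial kernels and use the standard dichotomy (at least one of $|m-k|,|m-n|$ exceeds $|n-k|/2$), which, together with the regularization $|\phi_m(j)|\le C_\alpha(1+|j-m|)^{-\alpha}$ (legitimate, since $\|\phi_m\|_\infty\le\|\phi_m\|_2=1$), gives the clean bound $\Sigma(n,k)\le C(1+|n-k|)^{-\alpha}$ for any $\alpha>1$, with a constant independent of $k$. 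The paper instead writes $\alpha=(\alpha-1-\epsilon/2)+(1+\epsilon/2)$, exploits the multiplicative inequality $a+b\le ab$ for $a,b\ge2$, and must separately handle the near-diagonal indices $m\in\{n,n\pm1,k,k\pm1,0,\pm1\}$ in two cases ($|k|\ge2$ and $|k|\le1$); this only yields decay $|n-k|^{-(\alpha-1-\epsilon/2)}$ with an extra factor $|k|^{1+\epsilon/2}$, and that loss of one power is precisely why the threshold $\alpha>3/2+q/2$ appears there. Your route buys a sharper result (the conclusion already holds for $\alpha>\max\{1,(1+q)/2\}$, so the stated hypothesis is comfortably sufficient) and avoids all the case analysis; the paper's argument reaches the same conclusion under its stated hypothesis, just less economically. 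Your final summability step, $\sum_n|n|^q(1+|n-k|)^{-2\alpha}<\infty$ whenever $2\alpha-q>1$, with $k$-dependence entering only via $|n|^q\le 2^q(|n-k|^q+|k|^q)$, matches the constant $C_{q,k}$ in the statement.
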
 

\begin{remark}{\rm  We note that it seems possible to obtain a  power-law SULE  version; to do so, one must prove a version of the Theorem 7.1 in \cite{DJLS} in the power-law case and follow the same proof of the Lemma \ref{PLSULEthm}.}
\end{remark}

\begin{proof} Let $k \in \mathbb{Z}$ and let \(\delta_k \), \( (\delta_k)(j) = \delta_{jk} \),  \( k,j \in \mathbb{Z} \). Our goal is to show that
\[
\sup_{t \in \mathbb{R}}  \sum_{n \in \mathbb{Z}} |n|^q |\langle e^{-itH}\delta_k ,\delta_n \rangle|^2 < C_{q,k}.
\]

Since the moment is given by a sum over $n \in \mathbb{Z}$, we may disregard finitely many indices.  The terms with $|n-k| \le 1$ and $|n| \le 1$ form a finite set and therefore contribute a uniformly bounded amount. Thus, we shall restrict our analysis to indices satisfying $|n-k| \ge 2$ and $|n| \ge 2$.

Note that:
\begin{eqnarray*} \langle e^{-itH}\delta_k ,\delta_n \rangle &=& \sum_{m \in \mathbb{Z}} \overline{\langle \phi_m,\delta_k \rangle} \langle e^{-itH} \phi_m  ,\delta_n \rangle\\ &=& \sum_{m \in \mathbb{Z}} e^{it\lambda_m} \overline{\langle \phi_m,\delta_k \rangle} \langle \phi_m  ,\delta_n \rangle, \, n \in \mathbb{Z}.   
\end{eqnarray*}
Since $|\phi_m(n)| \leq 1, \, m,n \in \mathbb{Z}$ and
\begin{equation*}
|\phi_m(n)| \leq  \frac{\gamma_\alpha }{|n - m|^\alpha}, \, m,n \in \mathbb{Z},
\end{equation*}
one has
\begin{eqnarray}\label{eqsule1}\nonumber |\langle e^{-itH}\psi ,\delta_n \rangle| &\leq&  \sum_{m \in \mathbb{Z}} |\langle \phi_m,\delta_k \rangle| |\langle \phi_m  ,\delta_n \rangle| = \sum_{m \in \mathbb{Z}} | \phi_m(k) | |\phi_m(n)|\\ \nonumber &=& \biggr\{\displaystyle\sum_{\substack{|m| \geq 2, \, |n -m| \geq 2 \\  |k - m|\geq  2}} | \phi_m(k) | |\phi_m(n)|\biggr\} \\ \nonumber &+&  | \phi_n(k) | |\phi_n(n)| + | \phi_k(k) | |\phi_k(n)|\\ \nonumber &+& | \phi_{n\mp 1}(k) | |\phi_{n\mp 1}(n)| + | \phi_{k\mp 1}(k) | |\phi_{k\mp 1}(n)| \\ \nonumber &+& | \phi_{-1}(k) | |\phi_{-1}(n)| + | \phi_{0}(k) | |\phi_{0}(n)| + | \phi_{1}(k) | |\phi_{1}(n)|  \\ \nonumber &\leq& \biggr\{\displaystyle\sum_{\substack{|m| \geq 2, \, |n -m| \geq 2 \\  |k - m|\geq  2}}  | \phi_m(k) | |\phi_m(n)|\biggr\}\\ &+& \nonumber \frac{2\gamma_\alpha}{|n-k|^\alpha}+ \frac{\gamma_\alpha}{|(n\mp1)-k|^\alpha} + \frac{\gamma_\alpha}{|n-(k\mp1)|^\alpha} \\   &+& \frac{\gamma_\alpha}{|n+1|^\alpha} + \frac{\gamma_\alpha}{|n|^\alpha} + \frac{\gamma_\alpha}{|n-1|^\alpha}.
\end{eqnarray}
One has also
\begin{eqnarray*}
\displaystyle\sum_{\substack{|m| \geq 2, \, |n -m| \geq 2 \\  |k - m|\geq  2}}   | \phi_m(k) | |\phi_m(n)| &\leq&   \gamma_\alpha^2 \displaystyle\sum_{\substack{|m| \geq 2, \, |n -m| \geq 2 \\  |k - m|\geq  2}}   \frac{1}{|k - m|^\alpha} \frac{1}{|n - m|^\alpha}.
\end{eqnarray*}

We analyze two distinct cases:

\noindent {\bf Case} $|k| \geq 2$: for $|n - m|\geq 2$ and $|k - m| \geq 2$, since $a,b \geq 2$ implies $a+b \leq ab$, 
\begin{eqnarray*}
|n-k| &\leq& |k - m|+|n - m|  \leq |k - m||n - m|, 
\end{eqnarray*}
for $|k - m| \geq 2$ and $|k| \geq 2$,
\[|m| = |m-k+k|\leq |m-k|+|k| \leq |m-k||k|, \]
\[\frac{|m|}{|k|}  \leq |k-m| \leq |k-m|  |n - m|.\]
Thus, in this case, for every $\epsilon>0,$
\begin{eqnarray*}
\frac{1}{|k - m|^\alpha} \frac{1}{|n - m|^\alpha} &=& \frac{1}{(|n - m||k - m|)^{(\alpha-1-\epsilon/2)+1+\epsilon/2}}\\  &=& \frac{1}{(|n - m||k - m|)^{\alpha-1-\epsilon/2}} \frac{1}{(|n - m||k - m|)^{1+\epsilon/2}}\\  &\leq& \frac{1}{|n-k|^{\alpha-1-\epsilon/2}}  \frac{|k|^{1+\epsilon/2}}{|m|^{1+\epsilon/2}} 
\end{eqnarray*}
and, therefore, 
\[ \displaystyle\sum_{\substack{|m| \geq 2, \, |n -m| \geq 2 \\  |k - m|\geq  2}}  | \phi_m(k) | |\phi_m(n)|  \leq  \frac{A_0 \gamma_\alpha^2 |k|^{1+\epsilon/2}}{|n-k|^{\alpha-1-\epsilon/2}} \]
where $A_0 = \displaystyle\sum_{m \neq 0} \frac{1}{|m|^{1+\epsilon/2}} < + \infty$.
Since $\alpha> \epsilon_0 + 3/2 + q/2 $ for some $\epsilon_0>0$, one has
\begin{eqnarray}\label{eqsule2}\nonumber
\displaystyle\sum_{\substack{|m| \geq 2, \, |n -m| \geq 2 \\  |k - m|\geq  2}} | \phi_m(k) | |\phi_m(n)|   &\leq&  \frac{A_0 \gamma_\alpha^2 |k|^{1+\epsilon_0/2}}{|n-k|^{\alpha-1-\epsilon_0/2}}\\  &\leq&     \frac{A_0 \gamma_\alpha^2 |k|^{1+\epsilon_0/2}}{|n-k|^{\frac{1+q+\epsilon_0}{2}}}.  
\end{eqnarray}
Combining \eqref{eqsule1} with \eqref{eqsule2}, is obtained
\[
\sup_{t \in \mathbb{R}}  \sum_{n \in \mathbb{Z}} |n|^q |\langle e^{-itH}\delta_k ,\delta_n \rangle|^2 < C_{q,k}.
\]

\noindent {\bf Case} $|k|\leq 1$: for $|n - m|\geq 2$, $|k - m| \geq 2$ and $|m| \geq 2$, since $|n| \geq 2$,
\begin{eqnarray*}
|n-k| &\leq& |n|+|k|\leq  |n|+2 \leq 2|n|  \leq   2(|m|+|n - m|)\\ &\leq& 2|m||n - m| \leq 2(|m-k|+|k|)|n - m|\\ &\leq&  2(|m-k|+|m-k|)|n - m| = 4 |m-k||n - m|,
\end{eqnarray*}
\[|m|-1  \leq |m| -|k| \leq |m-k| \leq |m-k||n - m|.\]
Thus, in this case, for every $\epsilon>0,$
\begin{eqnarray*}
\frac{1}{|k - m|^\alpha} \frac{1}{|n - m|^\alpha} &=& \frac{1}{(|n - m||k - m|)^{(\alpha-1-\epsilon/2)+1+\epsilon/2}}\\  &=& \frac{1}{(|n - m||k - m|)^{\alpha-1-\epsilon/2}} \frac{1}{(|n - m||k - m|)^{1+\epsilon/2}}\\  &\leq& \frac{4^{\alpha-1-\epsilon/2}}{|n-k|^{\alpha-1-\epsilon/2}}  \frac{1}{(|m|-1)^{1+\epsilon/2}} 
\end{eqnarray*}
and, therefore,

\begin{eqnarray*}
\displaystyle\sum_{\substack{|m| \geq 2, \, |n -m| \geq 2 \\  |k - m|\geq  2}}  | \phi_m(k) | |\phi_m(n)|  \leq  \frac{\Tilde{A_0} \gamma_\alpha^2 4^{\alpha-1-\epsilon/2}}{|n-k|^{\alpha-1-\epsilon/2}},   
\end{eqnarray*}
where $\Tilde{A_0} = \displaystyle\sum_{ |m| \geq 2} \frac{1}{(|m|-1)^{1+\epsilon/2}} < + \infty$. Since $\alpha> \epsilon_0 + 3/2 + q/2 $ for some $\epsilon_0>0$, one has

\begin{eqnarray}\label{eqsule3}\nonumber
\displaystyle\sum_{\substack{|m| \geq 2, \, |n -m| \geq 2 \\  |k - m|\geq  2}} | \phi_m(k) | |\phi_m(n)| &\leq& \frac{A_0 \gamma_\alpha^2 4^{\alpha-1-\epsilon_0/2}}{|n-k|^{\alpha-1-\epsilon_0/2}}\\  &\leq&  \frac{A_0 \gamma_\alpha^2 4^{\alpha-1-\epsilon_0/2}}{|n-k|^{\frac{1+q+\epsilon_0}{2}}}. 
\end{eqnarray}
Finally, combining \eqref{eqsule1} with \eqref{eqsule3}, again is obtained 

\[
\sup_{t \in \mathbb{R}}  \sum_{n \in \mathbb{Z}} |n|^q |\langle e^{-itH}\delta_k ,\delta_n \rangle|^2 < C_q.
\]
This concludes the proof of the theorem. 
\end{proof}

As mentioned in the Introduction, we conclude this section by proving Theorem \ref{mainresult}.

\subsection{Proof of Theorem \ref{mainresult}}

Note that \( i) \) follows directly from Theorem \ref{mainthm} by taking \( A = T_a +b \). Since, by Lemma \ref{PLSULEthm}, \( ii) \Rightarrow iii) \), it remains to prove \( ii) \). Let $\{\phi_m\}$ be an orthonormal basis of $\ell^2(\mathbb{Z})$ formed by eigenfunctions of ${\mathcal{L}}_0+b$, that is, $({\mathcal{L}}_0+b)\phi_m=\lambda_m \phi_m, \, m \in \mathbb{Z}$. 

Let $r \in \mathbb{N} \cup \{0\}$. By \(i)\), there exists a $\gamma>0$ such that $|\lambda_m - m - b(n)| \leq \gamma$ for all $m, n \in \mathbb{Z}$. Note that for $|m - n| \leq 2\gamma$, one has 
\[
|\phi_m(n)| \leq 1 \leq \frac{(2\gamma)^{r+1}}{|m - n|^{r+1}}.
\]
Thus, we need to estimate only the case where $|m - n| > 2\gamma$.

We proceed by induction on $r \in \mathbb{N}\cup\{0\}$. Observe that 
\[
(({\mathcal{L}}_0 + b)\phi_m)(n) = \lambda_m \phi_m(n) \iff \phi_m(n) = \frac{(T_a\phi_m) (n)}{\lambda_m-(n+b(n))}, 
\]
which implies that
\[
\phi_m(n) = \frac{\sum_{k \in \mathbb{Z}} a(k) \phi_m(n-k)}{(\lambda_m -m - b(n))-(n-m)}.
\]
Thus, we obtain 
\[
|\phi_m(n)| \leq \frac{\sum_{k \in \mathbb{Z}} |a(k)|  |\phi_m(n-k)|}{||\lambda_m -m-b(n)|- |n-m||}.
\]

Since 
\[
|\lambda_m -m-b(n)| \leq \gamma, \quad m,n \in \mathbb{Z},
\] 
for $|m - n| > 2\gamma$, we get
\begin{eqnarray*}
|\phi_m(n)| &\leq& \frac{\sum_{k \in \mathbb{Z}} |a(k)|  |\phi_m(n-k)|}{||\lambda_m -m-b(n)|- |n-m||}\\ 
&=&  \frac{\sum_{k \in \mathbb{Z}} |a(k)|  |\phi_m(n-k)|}{|n-m| - |\lambda_m -m-b(n)|} \leq \frac{\sum_{k \in \mathbb{Z}} |a(k)|  |\phi_m(n-k)|}{|n-m| - \gamma}.   
\end{eqnarray*}
Hence,
\begin{equation}\label{maineq}
|\phi_m(n)| \leq  \frac{2 \sum_{k \in \mathbb{Z}}  |a(k)||\phi_m(n-k)|}{|m-n|}, \quad |m - n| > 2\gamma.    
\end{equation}
Thus, if \( r = 0 \), so $a \in \ell_0^1(\mathbb{Z})$ and, hence, there exists \( \gamma_0' > 0 \) such that
\begin{equation*}
|\phi_m(n)| \leq  \frac{\gamma_0'}{|m-n|}, \quad |m - n| > 2\gamma.    
\end{equation*}
As previously discussed, this is sufficient to ensure that there exists \( \gamma_0 > 0 \) such that
\begin{equation*}
|\phi_m(n)| \leq  \frac{\gamma_0}{|m-n|}, \quad m,n \in \mathbb{Z}.    
\end{equation*}
This proves the case \( r = 0 \). 

Now we assume that the result holds for $r \ge 1$, that is, $a \in \ell_r^1(\mathbb{Z})$ implies
\[
|\phi_m(n)|
\le \gamma_r\,
\frac{1}
{|m - n|^{r+1}}, \, r \geq 1.
\]
We will prove that the same estimate holds for $r+1$ if $a \in \ell_{r+1}^1(\mathbb{Z})$.

Let $a \in \ell_{r+1}^1(\mathbb{Z}) \subset \ell_r^1(\mathbb{Z})$. Let $k \in \mathbb{Z}$ and suppose that $|m-n+k| \ge 2$ and $|k| \ge 2$.
In this case, we have
\begin{equation}\label{eq010102}
|m-n|
\le |(m-n)+k| + |k|
\le |(m-n)+k|\,|k|
\iff
\frac{1}{|(m-n)+k|}
\le \frac{|k|}{|m-n|}.
\end{equation}

By the induction hypothesis and \eqref{eq010102}, for
$|(m-n)+k| \ge 2$ and $|k| \ge 2$, we obtain
\begin{eqnarray*}
|\phi_m(n-k)|
&\le& \gamma_r\,
\frac{1}
{|m-n+k |^{r+1}} \\
&\le&  \gamma_r\,
\frac{1}
{|m-n|^{r+1}}\,|k|^{r+1}.
\end{eqnarray*}

Since $a(0)=0$, using \eqref{maineq}, we obtain
\begin{eqnarray}\label{maineq07}
\nonumber |\phi_m(n)|
&\le&
\frac{2\gamma_r}
{|m - n|^{r+2}}
\sum_{\substack{|(m-n)+k|\ge 2\\ |k|\ge 2}}
|a(k)|\,|k|^{r+1} \\
\nonumber&+&
\frac{2 }
{|m - n|}
|a(\mp 1)|\,|\phi_m(n \pm 1)| \\
\nonumber &+&
\frac{2}
{| m - n|}
|a(n-m)|\,|\phi_m(n-(n-m))| \\
&+&
\frac{2 }
{| m - n|}
|a((n-m)\pm 1)|\,|\phi_m(n-(n-m\pm 1))|.
\end{eqnarray}

Since $a \in \ell_{r+1}^1(\mathbb{Z})$, there exists $\gamma_r' > 0$ such that
for all $k \in \mathbb{Z}$,
\begin{equation}\label{eqfn}
|a(k)| \le \gamma_r' \frac{1}{| k|^{r+1}}.    
\end{equation}

Finally, using the induction hypothesis on the second term of the sum \eqref{maineq07} 
and applying \eqref{eqfn} to the third and fourth terms, we conclude that there exists 
a constant
$\widetilde{\gamma_r} > 0$, depending only on $ r$ and $\|a\|_{r+1}$, such that
\[
|\phi_m(n)|
\le \widetilde{\gamma_r}
\frac{1}
{|m - n|^{r+2}}.
\]
This completes the proof of the theorem.

\hfill \qedsymbol




\begin{appendices}

\section{Discrete spectrum}\label{secappendix}
\

As mentioned in the introduction, in this appendix, we prove that self-adjoint operators of the form \( H = A + V \) in \(\ell^2(\mathbb{Z})\), where \( A \) is bounded and \( V \) is such that $\displaystyle\lim_{m \to \mp\infty} V(m) = \mp\infty$, has purely discrete spectrum and its eigenvalues satisfy $\displaystyle\lim_{m \to \mp\infty} \lambda_m = \mp\infty$ (Proposition \ref{discretproposition}).

We will use the following result.

\begin{theorem}[Theorem 11.3.13. in \cite{Oliveira}]\label{thmdiscret}
Let \( H \) be a self-adjoint operator on \( \ell^2(\mathbb{Z}) \). The following statements are equivalent:
\begin{enumerate}
    \item[{\rm (a)}] There exists an orthonormal basis \( (\phi_m) \) of \( \ell^2(\mathbb{Z}) \) consisting of eigenfunctions of \( H \), that is, \( H\phi_m = \lambda_m \phi_m \), where the eigenvalues \( \lambda_m \) are real and form a discrete set. Furthermore, each one of these eigenvalues has finite multiplicity, and \( \displaystyle\lim_{m \to \mp \infty} |\lambda_m| = +\infty \).
    
    \item[{\rm (b)}] \( \left(H - zI\right)^{-1}\) is a compact operator for some \( z \in \rho(T) \) (and thus for all \( z \in \rho(T)) \).
\end{enumerate}
\end{theorem}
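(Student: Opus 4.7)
\textbf{Proof plan for Theorem \ref{thmdiscret}.}

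The plan is to prove the equivalence (a)$\Leftrightarrow$(b) by standard spectral-theoretic arguments, using the spectral theorem for compact self-adjoint operators in one direction and a diagonalization/approximation argument in the other, together with the second resolvent identity to handle the ``for some / for all $z$'' clause.

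\medskip

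\emph{Step 1 (Independence of $z$).} I would first dispose of the parenthetical statement in (b). Assume $(H-z_0 I)^{-1}$ is compact for some $z_0\in\rho(H)$, and let $z\in\rho(H)$. By the second resolvent identity,
\[
(H-zI)^{-1}=(H-z_0 I)^{-1}+(z-z_0)(H-zI)^{-1}(H-z_0 I)^{-1}.
\]
Since the compact operators form a two-sided ideal in $\mathcal{B}(\ell^2(\mathbb{Z}))$, the right-hand side is compact, so $(H-zI)^{-1}$ is compact. Thus compactness of the resolvent is a property of $H$ and not of the spectral parameter.

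\medskip

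\emph{Step 2 ((a)$\Rightarrow$(b)).} Assume (a). Pick any $z\in\rho(H)$ (for instance $z=i$, which is automatically in $\rho(H)$ since $H$ is self-adjoint). In the orthonormal basis $(\phi_m)$ of eigenfunctions, the resolvent acts diagonally:
\[
(H-zI)^{-1}\phi_m=\frac{1}{\lambda_m-z}\,\phi_m,
\]
so its singular values are $|\lambda_m-z|^{-1}$. Because $|\lambda_m|\to+\infty$, these tend to $0$; hence the truncations
\[
R_N:=\sum_{|m|\le N}\frac{1}{\lambda_m-z}\,\langle\cdot,\phi_m\rangle\phi_m
\]
are finite-rank and converge to $(H-zI)^{-1}$ in operator norm. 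The norm limit of finite-rank operators is compact, giving (b).

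\medskip

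\emph{Step 3 ((b)$\Rightarrow$(a)).} Assume (b) and fix $z=i\in\rho(H)$. Then $R:=(H-iI)^{-1}$ is compact and normal. I apply the spectral theorem for compact normal operators to $R$: there is an orthonormal basis $(\phi_m)$ of $\ell^2(\mathbb{Z})$ consisting of eigenvectors of $R$, with eigenvalues $(\mu_m)$ such that $\mu_m\to 0$ and each nonzero eigenvalue has finite multiplicity. A standard observation shows $0$ cannot be an eigenvalue of $R$ since $R$ has dense range (because $H-iI$ is surjective), so every $\mu_m\ne 0$, and the correspondence $\lambda_m=i+\mu_m^{-1}$ identifies each $\phi_m$ as an eigenfunction of $H$ with eigenvalue $\lambda_m\in\mathbb{R}$. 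The fact that $\mu_m\to 0$ forces $|\lambda_m|\to+\infty$, and finite multiplicity of nonzero eigenvalues of $R$ transfers to $H$. This yields (a).

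\medskip

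\emph{Main obstacle.} There is no serious conceptual obstacle, since this is a classical characterization. The only delicate points are (i) verifying that $R$ is injective (equivalently, that $0$ is not an eigenvalue) so that one can invert $\mu_m$; this follows from $H-iI$ being a bijection $\mathrm{dom}\,H\to\ell^2(\mathbb{Z})$; and (ii) checking that the eigenfunctions of $R$ actually lie in $\mathrm{dom}\,H$, which is immediate since $\phi_m=\mu_m^{-1}R\phi_m\in\mathrm{ran}\,R=\mathrm{dom}\,H$. Both are routine once the setup is correct.
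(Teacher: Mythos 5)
The paper does not prove this statement at all: it is quoted verbatim as Theorem 11.3.13 of the reference \cite{Oliveira} and used as a black box in Appendix \ref{secappendix}, so there is no in-paper proof to compare against. Your argument is the standard textbook proof of the compact-resolvent characterization and is correct. Step 1 is fine (though the identity you invoke, relating the resolvents of the \emph{same} operator at two spectral points, is usually called the \emph{first} resolvent identity; the second resolvent identity compares two different operators). Step 2 correctly uses that the resolvent is diagonal in the eigenbasis with entries $(\lambda_m-z)^{-1}\to 0$, so it is a norm limit of finite-rank truncations. Step 3 correctly handles the two delicate points (injectivity of $R$, and membership of the eigenvectors in $\mathrm{dom}\,H$). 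The only item of the conclusion (a) you do not explicitly address is that the set $\{\lambda_m\}$ is \emph{discrete}, i.e.\ has no finite accumulation point; this follows immediately from your setup, since an accumulation point $\lambda\in\mathbb{R}$ of the $\lambda_m$ would produce a nonzero accumulation point $(\lambda-i)^{-1}$ of the eigenvalues $\mu_m$ of the compact operator $R$, which is impossible. With that one sentence added, the proof is complete.
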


\subsection{Proof of Proposition \ref{discretproposition}}

The result is a consequence of the second resolvent identity. Initially, we closely followed the proof  of Theorem 4.1 in \cite{Pigossi}. Namely, let $\{\delta_m\}$ the canonical basis of $\ell^2(\mathbb{Z})$ and note that, for every $m  \in \mathbb{Z}$ and every $n \in \mathbb{Z}$, 
\[[V(\delta_m)](n)= V(n)(\delta_m)(n) = V(n) (\delta_{nm}) = V(m)\delta_{nm} =  (V(m)\delta_m)(n),\]
so \( V \) satisfies the hypotheses of the Theorem \ref{thmdiscret} and, therefore, $ \left( V - iI\right)^{-1}$  is a compact operator.    

Let us apply the second resolvent identity with:
\begin{itemize}
    \item $T = H = A+V$,
    \item $S = V$,
    \item $z = i$.
\end{itemize}

Recall the second resolvent identity states:

\[
    \left(T - iI\right)^{-1} - \left(S - iI\right)^{-1} = \left(T - iI\right)^{-1}(S - T)\left(S - iI\right)^{-1}.
\]

Substituting $T = H = A+V$, $S = V$ and $z=i$, one has 

\begin{eqnarray*} \label{eq1}
    \left(H - iI\right)^{-1} - \left(V - iI\right)^{-1} = \left(H - iI\right)^{-1} \left(-A\right) \left(V - iI\right)^{-1}.
\end{eqnarray*} 
Thus,
\begin{eqnarray*}
\label{eq2}
    \left(H - iI\right)^{-1} = \left(V - iI\right)^{-1} + \left(H - iI\right)^{-1} \left(-A\right) \left(V - iI\right)^{-1}.
\end{eqnarray*} 
Since $H$ is self-adjoint, $\left(H - iI\right)^{-1}$ is bounded since that in this case $\sigma(H) \subset \mathbb{R}$ \cite{Oliveira}. Thus, the sum above is a sum of compact operators and is, therefore, compact. Hence, by Theorem \ref{thmdiscret}, $H$ has purely  discrete spectrum, each one of the eigenvalues has finite multiplicity,  and 
\[\displaystyle\lim_{m \to \mp\infty} |\lambda_m| = +\infty.\]

Finally, let $(\phi_m)$ be an orthonormal basis consisting of eigenfunctions of $H.$ Suppose, by contradiction, that there exists $a>0$ such that
$\lambda_m \ge -a$ or $\lambda_m \le a$ for all $m \in \mathbb{Z}$.
Since, for all $n \in \mathbb Z$,
\[
(A\delta_n)(n) + V(n) = \langle H\delta_n, \delta_n \rangle
= \sum_{m\in\mathbb Z} \lambda_m |\langle \delta_n, \phi_m\rangle|^2,
\]
it follows that, for all $n \in \mathbb{Z}$,
\[
(A\delta_n)(n) + V(n) \ge -a \quad \text{or} \quad (A\delta_n)(n) + V(n) \le a.
\]
This contradicts the fact that $\displaystyle\lim_{n\to\pm\infty} V(n)=\pm\infty$, since $A$ is bounded.
Hence, since the set $\{\lambda_m\}$ is discrete,
\[
\lim_{m\to\pm\infty} \lambda_m = \pm\infty.
\]
\hfill \qedsymbol
\end{appendices}



\noindent {\bf \Large{Acknowledgments}} 

\noindent M. Aloisio was supported by grant \#2025/25338-1 from the São Paulo Research Foundation (FAPESP) and in part by grant \#01/24/APQ-03132-24 from the Minas Gerais Research Foundation (FAPEMIG). M. A. thanks César R. de Oliveira, Deberton Moura and Mariane Pigossi for reading this paper and for their valuable and constructive suggestions. Part of this work benefited from discussions of the author with Tiago Pereira and Edgar Matias at the Institute of Mathematics and Computer Sciences (ICMC), University of São Paulo. The author sincerely thanks them and the Institute for their hospitality.  The author also thanks the anonymous referee for valuable suggestions that have improved the exposition of the manuscript.

\ 

\noindent {\bf Declaration.}

\noindent {\bf Conflict of interest.}  The author has no conflict of interest/conflicting interests to declare that are relevant to the content of this article.


\begin{thebibliography}{15}
\addcontentsline{toc}{section}{References}

\bibitem{Aizenman} Aizenman, M., and Molchanov, S.: Localization at large disorder and at extreme energies: an elementary derivation. Commun. Math. Phys. {\bf 157} (1993), 245-278.

\bibitem{Avila} Avila, A., Damanik, D., and Zhenghe, Z.: Schr\"odinger operators with potentials generated by hyperbolic transformations: II. Large deviations and Anderson localization. arXiv:2402.00215 [math.SP]. Preprint, (2024).

\bibitem{Damanik} Damanik, D.: Schr\"odinger operators with dynamically defined potentials, Ergodic Theory Dynam. Systems. {\bf 37} (2017), 1681-1764

\bibitem{DamanikFillman1} Damanik, D., and Fillman J.:  One-Dimensional Ergodic Schrödinger Operators, I. General Theory, Graduate Studies in Mathematics 221, American Mathematical Society, (2022).

\bibitem{DamanikFillman2} Damanik, D., and Fillman J.: One-Dimensional Ergodic Schrödinger Operators, II. Specific Classes, Graduate Studies in Mathematics 249, American Mathematical Society, (2024).

\bibitem{Oliveira} de Oliveira, C. R.: Intermediate Spectral theory and quantum dynamics. Progress in Math. Phys. Basel, Birkh\"auser, (2009).

\bibitem{Pigossi3}  de Oliveira, C. R., and  Pigossi, M.: Exact solvable family of discrete Schrödinger operators with long-range hoppings. Math. Scand. { \bf 130} (2024), 527-537.

\bibitem{Pigossi2}  de Oliveira, C. R., and  Pigossi, M.: Persistence of point spectrum for perturbations of one-dimensional operators with discrete spectra. Spectral Theory and Mathematical Physics. Latin American Mathematics Series (Springer-Verlag) (2020), 121-151. 

\bibitem{Pigossi1}  de Oliveira, C. R., and  Pigossi, M.: Point spectrum and SULE for time-periodic perturbations of discrete 1D Schr\"odinger operators with electric fields. J. Stat. Phys. {\bf 173} (2018), 140-162.

\bibitem{Pigossi}  de Oliveira, C. R., and  Pigossi, M.: Proof of dynamical localization for perturbations of discrete 1D Schr\"odinger operators with uniform electric fields. Math. Z. {\bf 291} (2019), 1525-1541.

\bibitem{DJLS} Del Rio, R., Jitomirskaya, S., Last, Y., and Simon, B.: Operators with singular continuous spectrum, IV. Hausdorff dimension, rank one perturbations and localization. J. Anal. Math. \textbf{69} (1996), 153-200.

\bibitem{Spencer}  Fr\"ohlich, J., and Spencer, T.: Absence of diffusion in the Anderson tight binding model for large disorder or low energy. Comm. Math. Phys. {\bf 88} (1983),  151-184.

\bibitem{Germinet}  Germinet F., Kiselev A., and  Tcheremchantsev S.: Transfer matrices and transport for Schr\"odinger operators. Ann. Inst. Fourier. {\bf 54} (2004), 787-830.

\bibitem{Coni}  Gebert M., and and Rojas-Molina C.: Lifshitz tails for the fractional Anderson model. J. Statis. Phys. { \bf 179} (2020), 341-353.

\bibitem{Hu} Hu, S., and Sun, Y.: Wannier–Stark Localization for time quasi-periodic hamiltonian operator on $\mathbb{Z}$. Ann. Henri Poincaré. { \bf 26} (2025), 3739–3766.

\bibitem{Sun2} Jian, J., and Sun Y.: Dynamical localization for polynomial long-range hopping random operators on ${\mathbb{Z}}^d$. Proc. Amer. Math Soc.  {\bf 150}  (2022), 5369-5381.

\bibitem{Jitomirskaya} Jitomirskaya, S., Last, Y.: Power-law subordinacy and singular spectra I. Half-line operators. Acta Math. {\bf 183} (1999), 171-189.

\bibitem{Last} Last, Y.: Quantum dynamics and decomposition of singular continuous spectra. J. Funct. Anal. {\bf 142} (2001), 406-445.

\bibitem{Nazareno} Nazareno, H. N., da Silva, C. A. A., de Brito, P. E.: Dynamical localization in aperiodic 1D systems under the action of electric fields. Superlattices Microstruct. (1995), 297-307.

\bibitem{Simon2} Reed, M., and Simon, B.: Methods of modern mathematical physics, Vol. IV: analysis of operators. Springer-Verlag (1972).
 
\bibitem{shi1} Shi, Y.: A multi-scale analysis proof of the power-law localization for random operators on ${\mathbb{Z}}^d$. J. Differ. Equ. {\bf 297} (2021), 201-225.

\bibitem{shi2} Shi, Y.:  Localization for Almost-Periodic Operators with Power-law Long-range Hopping: A Nash-Moser Iteration Type Reducibility Approach, Commun. Math. Phys. {\bf 402} (2023), 1765-1806.

\bibitem{shi3} Shi, Y., and Wen, L.: Localization for a class of discrete long-range quasi-periodic operators, Letters in Math. Physics. {\bf 112}. Article number 86, (2022).

\bibitem{shi4} Shi, Y., and Wen, L.: Diagonalization in a quantum kicked rotor model with non-analytic potential. J. Differ. Equ. {\bf 355} (2023), 334-368.

\bibitem{AbstSimon} Simon, B.: Absence of ballistic motion, Commun. Math. Phys. {\bf 134}  (1990), 209-212.

\bibitem{SimonMary} Simon, B.: Almost periodic Schr\"odinger operators. IV. The Maryland model. Ann. Physics. { \bf 159} (1985), 157-183. 

\bibitem{Stolz} Stolz, G.: An introduction to the mathematics of Anderson localization, entropy and the quantum II, Contemp. Math. {\bf 552} (2011), 71-108. 
\bibitem{Sun} Sun, Y., and Wang, C.: Localization of polynomial long-range hopping lattice operator with uniform electric fields. Letters in Math. Physics. {\bf 114}. Article number 6, (2024).

\bibitem{Sun3} Sun, Y., and Wang, C.: Stark localization of Jacobi operator with applications to Quantum Spin Models.  Letters in Math. Physics. {\bf 115}. Article number 126, (2025).

\bibitem{Tcheremchantsev} Tcheremchantsev, S.: How to prove dynamical localization. Commun. Math. Phys. {\bf 221} (2001), 27-56.

\end{thebibliography}
\end{document}